\numberwithin{equation}{section}
\def\int{\mbox{\rm int}}
\def\And{\mbox{\rm ~and~}}
\def\i{\mbox{\rm (\hspace{0.2mm}i\hspace{0.2mm})}\,}
\def\({\mbox{\rm (}}\def\){\mbox{\rm )}}
\newcommand{\Rmnum}[1]{\expandafter\@slowromancap\romannumeral #1@}
\newtheorem{theorem}{Theorem}[section]
\newaliascnt{lemma}{theorem}
\newtheorem{lemma}[lemma]{Lemma}
\newaliascnt{proposition}{theorem}
\newaliascnt{fact}{theorem}
\newaliascnt{definition}{theorem}
\newtheorem{definition}[definition]{Definition}
\newaliascnt{conjecture}{theorem}
\newtheorem{conjecture}[conjecture]{Conjecture}
\newaliascnt{corollary}{theorem}
\newtheorem{corollary}[corollary]{Corollary}
\newaliascnt{claim}{theorem}
\newaliascnt{problem}{theorem}
\newaliascnt{question}{theorem}
\newaliascnt{remark}{theorem}
\newaliascnt{example}{theorem}
\newtheorem{example}[example]{Example}
\newaliascnt{notation}{theorem}
\begin{document}
\begin{center}
{\Large\bf
Polynomials Counting Group Colorings in Graphs}\\[7pt]
\end{center}
\begin{center}
Houshan Fu\\[5pt]
School of Mathematics and Information Science\\
 Guangzhou University\\
Guangzhou 510006, Guangdong, P. R. China\\[5pt]
Email: fuhoushan@gzhu.edu.cn\\[15pt]
\end{center}
\begin{abstract}
Jaeger et al. in 1992 introduced group coloring as the dual concept to group connectivity in graphs. Let $A$ be an additive Abelian group, $ f: E(G)\to A$ and $D$ an  orientation of a graph $G$. A vertex coloring $c:V(G)\to A$ is an $(A, f)$-coloring if $c(v)-c(u)\ne f(e)$ for each oriented edge $e=uv$ from $u$ to $v$ under $D$. Kochol recently introduced the assigning polynomial to count nowhere-zero chains in graphs--nonhomogeneous analogues of nowhere-zero flows in \cite{Kochol2022}, and later extended the approach to regular matroids in \cite{Kochol2024}.  Motivated by Kochol's work, we define the $\alpha$-compatible graph and  the cycle-assigning polynomial $P(G, \alpha; k)$ at $k$ in terms of $\alpha$-compatible spanning subgraphs, where $\alpha$ is an assigning of $G$ from its cycles to $\{0,1\}$.  We prove that  $P(G,\alpha;k)$ evaluates the number of $(A,f)$-colorings of $G$ for any Abelian group $A$ of order $k$ and $f:E(G)\to A$ such that the assigning $\alpha_{D,f}$ given by $f$ equals $\alpha$.  Such an assigning is admissible. Based on Kochol's work, we derive that $k^{-c(G)}P(G,\alpha;k)$ is a polynomial enumerating $(A,f)$-tensions and counting specific nowhere-zero chains. 

Furthermore, by extending Whitney's broken cycle concept to broken compatible cycles, we show that the absolute value of the coefficient of $k^{|V(G)|-i}$ in $P(G,\alpha;k)$  associated with admissible assignings $\alpha$ equals the number of $\alpha$-compatible spanning subgraphs that have $i$ edges and contain no broken $\alpha$-compatible cycles. According to the combinatorial explanation, we establish a unified order-preserving relation from admissible assignings to cycle-assigning polynomials, and further show that for any admissible assigning $\alpha$ of $G$ with  $\alpha(e)=1$ for every loop $e$, the coefficients of $P(G,\alpha;k)$ are nonzero and alternate in sign.

\noindent{\bf Keywords:} Graph coloring, tension, chromatic polynomial, assigning polynomial, $\alpha$-compatible graph, broken cycle\vspace{1ex}\\
{\bf Mathematics Subject Classifications:} 05C31, 05C15
\end{abstract}
\section{Introduction}\label{Sec1}
The graphs $G$ with vertex set $V(G)$ and edge set $E(G)$ considered in this paper are finite, allowing multiple edges and loops. A more general setting and background on graphs can be found in the book \cite{Bondy2008}. An {\em orientation} of a graph is an assignment of direction to each edge. In this context, each oriented edge is referred to as an {\em arc}. An {\em oriented graph} arises from a graph $G$ together with a particular orientation $D$, denoted by  $D(G):=\big(V(D),A(D)\big)$, where $V(D)=V(G)$ and $A(D)$ is the collection of arcs under the orientation $D$.  

Graph coloring is a significant subfield in graph theory that originated in the middle of the 19th century with the famous Four Color Problem. A {\em$k$-vertex coloring} of $G$ is a mapping $c:V(G)\to S$, where $S$ is a set of distinct $k$ colors, typically $S=\{1, 2, . . . , k\}$. A vertex coloring $c$ is {\em proper} if  $c(u)\ne c(v)$ whenever vertices $u$ and $v$ are adjacent.

We consider a finite additive Abelian group $A$ with the identity $0$ as coloring sets in this paper. Jaeger, Linial, Payan and Tarsi \cite{Jaeger1992} introduced  the concept of  group coloring as the dual concept of group connectivity of graphs in 1992. Group connectivity and group colorings of graphs are nicely surveyed in \cite{Lai-Shao2011}. Let $ f: E(G)\to A$ be a function. A vertex coloring $c:V(G)\to A$ is called an {\em$(A, f)$-coloring} if $c(v)-c(u)\ne f(e)$ for each oriented edge $e=uv$ from $u$ to $v$ under an orientation $D$ of $G$. When $f\equiv0$, $(A,0)$-coloring is exactly the ordinary proper coloring. Furthermore, $G$ is said to be {\em$A$-colorable} under the orientation $D$ if for any function $ f:E(G)\to A$, $G$ always admits an $(A, f)$-coloring. Since then, topics related to group coloring problems have attracted a lot of research attentions, such as \cite{Kral2005,Langhede-Thomassen2020,Langhede-Thomassen2024,Li-Lai2013,Lai-Li2006,Lai-Mazza2021,Lai-Mazza2021-1,Lai-Zhang2002}.   Their primary focus is on the (weak) group chromatic number and related group coloring problems, such as whether a graph that is $A_1$-colorable for some Abelian group $A_1$ is also $A_2$-colorable for any Abelian group $A_2$ of the same order. However, another fundamental problem of counting $(A, f)$-colorings has been overlooked. In this paper, we introduce the cycle-assigning polynomial, a concept inspired by Kochol's work \cite{Kochol2022,Kochol2024}, which precisely counts the number of $(A, f)$-colorings of graphs and generalizes the classical chromatic polynomial. We primarily investigate its fundamental properties, particularly focusing on its coefficients.

A fundamental problem in graph theory is determining the {\em chromatic number} of a graph, which refers to the smallest integer $k$ such that the graph admits a proper coloring with $k$ colors. Counting proper colorings can be seen as another facet of this problem. In an attempt to address the long-standing Four Color Problem, Birkhoff \cite{Birkhoff1912} discovered that the number of proper colorings of a planar $G$ with $k$ colors is a polynomial in $k$,  known as the {\em chromatic polynomial}. Whitney \cite{Whitney1932,Whitney1932-1} later extended the concept of the chromatic polynomial to general graphs and gave a combinatorial description of its coefficients by introducing the broken cycle, now famously known as the  {\em Whitney's Broken Cycle Theorem}. Since then, various generalizations of this theorem have been developed across different mathematical structures, such as matroids \cite{Brylawski1977,Heron1972}, lattices \cite{Blass-Sagan1997,Rota1964}, hyperplane arrangements \cite{Orlik-Terao1992}, hypergraphs \cite{Dohmen1995}.  See \cite{Dohmen-Trinks2014} for more related work. Motivated by these work, it is natural to ask the two questions:
\begin{itemize}
\item Is there a polynomial function of $|A|$ that generalizes the chromatic polynomial and counts $(A,f)$-colorings in graphs?
\item Can we provide a combinatorial interpretation for the coefficients of the polynomial function?
\end{itemize}

It is worth remarking that the difference between the Abelian group colorings of vertices in an oriented graph gives rise to a group-valued function on arcs, refereed to as a {\em tension},  whose value at each arc is the head-point value minus the tail-point value of the coloring. From this perspective, the proper colorings correspond to nowhere-zero tensions. It is known that the number of nowhere-zero tensions in Abelian groups of orders $k$ is also a polynomial. This was first introduced by Tutte \cite{Tutte1954} and later formally named the {\em tension polynomial} of $G$ by Kochol \cite{Kochol2002}. Note that the tension polynomial is a nontrivial divisor of the chromatic polynomial. For more related work, refer to \cite{Chen2007,Kochol2004}. Likewise, an arbitrary $(A,f)$-coloring of $G$ also produces an $(A,f)$-tension, whose detailed definition will be formally presented in \autoref{Sec3-0}. Naturally, there are two questions regarding $(A,f)$-tensions analogous to the aforementioned ones in $(A,f)$-colorings. An answer to the first question in $(A,f)$-tensions seems to be implicit in the work of Kochol \cite{Kochol2024}. We shall present an explicit answer to this question in \autoref{f-Tension-Polynomial}, and further explore the connection  between $(A,f)$-colorings and nowhere-zero chains related to homomorphisms (see \cite{Kochol2024}) in \autoref{Sec3-0}.

To answer these questions, several significant works should be noted. Most recently, Kochol \cite{Kochol2022} first introduced the assigning polynomial to count nowhere-zero chains in graphs--nonhomogeneous analogues of nowhere-zero flows (called $(A,b)$-flows in \cite{Lai2006}) by the zero-one assigning from certain vertex sets of $G$ to the set $\{0,1\}$, which is a generalization of the classical flow polynomial \cite{Tutte1954}. Subsequently, Kochol \cite{Kochol2024} further extended the approach for represented regular matroids to count nowhere-zero chains associated with homomorphisms and zero-one assignings from circuits of a regular matroid to $\{0,1\}$. Kochol's pioneering work has been a cornerstone of our research. Particularly, the zero-one assignings on cycles of graphs play a central  role in the study of $(A,f)$-colorings and $(A,f)$-tensions in graphs. 

A {\em cycle } $C$ of $G$ is a connected 2-regular subgraph of $G$. The corresponding edge set $E(C)$ is referred to as a {\em circuit} of $G$. Let $\mathcal{C}(G)$ be the family of cycles of $G$. An {\em assigning} of $G$ is a mapping from $\mathcal{C}(G)$ to $\{0,1\}$. Write $\alpha\equiv0$ if $\alpha(C)=0$ for all $C\in\mathcal{C}(G)$ or $\mathcal{C}(G)=\emptyset$. 

To more conveniently investigate the properties of the cycle-assigning polynomials, we use the $\alpha$-compatible graph to mirror the role of both the $(M,\prec)$-compatible and $\delta(M,\alpha;X)$ associated with a matroid $M$ introduced by Kochol \cite{Kochol2024} (see \autoref{Sec3-0} for detailed definitions).
\begin{definition}
{\rm
Let $\alpha$ be an assigning of a graph $G$. $G$ is said to be {\em $\alpha$-compatible} if $\alpha(C)=0$ for its every cycle $C$. A subgraph $H$ of $G$ is called an {\em $\alpha$-compatible subgraph} if the restriction of $\alpha$ to $H$  satisfies $\alpha(C)=0$ for all cycles $C$ in $H$. In this sense, $H$ is $\alpha_H$-compatible, where $\alpha_H$ is the restriction of $\alpha$ to $H$ defined by $\alpha_H(C):=\alpha(C)$ for any cycle $C$ of $H$.}
\end{definition}

We first provide an explicit formula for the number of $(A,f)$-colorings via the $\alpha$-compatible spanning subgraph expansion in \autoref{Counting-Formula}. Inspired by this expression, we define a cycle-assigning polynomial in terms of $\alpha$-compatible spanning subgraphs. As shown in \autoref{Assigning-Polynomial}, this polynomial  precisely count $(A,f)$-colorings of graphs. 
\begin{definition}
{\rm For any assigning $\alpha$ of $G$, the {\em cycle-assigning polynomial} of $G$ is defined as 
\[
P(G,\alpha;k):=\sum_{\substack{\mbox{$H$ is an $\alpha$-compatible}\\ \mbox{spanning subgraph of $G$}}}(-1)^{|E(H)|}k^{c(H)},
\] 
where $c(H)$ is the number of components of $H$. }
\end{definition}
Furthermore, let
\[
\tau(G,\alpha;k):=\sum_{\substack{\mbox{$H$ is an $\alpha$-compatible}\\ \mbox{spanning subgraph of $G$}}}(-1)^{|E(H)|}k^{r(G)-r(H)},
\]
where $r(H):=|V(G)|-c(H)$. To better distinguish both polynomials $P(G,\alpha;k)$ and $\tau(G,\alpha;k)$, we call  $\tau(G,\alpha;k)$ an {\em $\alpha$-assigning polynomial} of $G$ originating from  \cite{Kochol2022,Kochol2024}. It should be noted that the $(A,f)$-tension serves as a bridge connecting $(A,f)$-colorings and nowhere-zero chains associated with regular matroids and homomorphisms (see \autoref{Sec3-0} for detailed discussion).

Due to Kochol's work \cite{Kochol2022,Kochol2024}, we show that $\tau(G,\alpha;k)$ not only enumerates $(A,f)$-tensions of $G$ but also counts the number of certain nowhere-zero chains in $A^{E(G)}$, as stated in \autoref{f-Tension-Polynomial}. As a direct consequence of \cite[Theorem 2]{Kochol2024}, an alternative expression for $P(G,\alpha;k)$ and $\tau(G,\alpha;k)$ is presented in \autoref{Other-Expression}.

To address the second question, we extend the broken cycle \cite{Whitney1932} to the broken $\alpha$-compatible cycle. In \autoref{Generalized-Broken-Circuit-Theorem}, we  show that the unsigned coefficients of cycle-assigning polynomials associated with admissible assignings $\alpha$ can be expressed in terms of $\alpha$-compatible spanning subgraphs that do not contain  broken $\alpha$-compatible cycles. This generalizes the Whitney's celebrated Broken Cycle Theorem \cite{Whitney1932}. Accordingly, we establish a unified order-preserving relation from admissible assignings to cycle-assigning polynomials when both are naturally ordered in \autoref{Comparison}. Additionally, we provide a new perspective on solving group coloring problems in graphs, with detailed discussion in \autoref{Sec4}.

Notice that the $\alpha$-assigning polynomial $\tau(G,\alpha;k)$ is a nontrivial divisor of the cycle-assigning polynomial $P(G,\alpha;k)$, that is, $P(G,\alpha;k)=k^{c(G)}\tau(G,\alpha;k)$.
Thus, the coefficients of $\tau(G,\alpha;k)$ share the same properties as those of  $P(G,\alpha;k)$. 

The paper is organized as follows. \autoref{Sec2} is devoted to studying the counting function for the number of $(A,f)$-colorings and the cycle-assigning polynomial. In \autoref{Sec3-0}, we mainly explain the close connection between cycle-assigning polynomials and Kochol's work \cite{Kochol2024}.  In \autoref{Sec3} and \autoref{Sec4}, we focus on exploring the properties of coefficients of cycle-assigning polynomials associated with admissible assignings.

\section{Cycle-assigning polynomials}\label{Sec2}
In this section, we focus on the study of cycle-assigning polynomials, exploring their structural properties and their roles in counting $(A,f)$-colorings. Let us review some necessary notations and definitions on graphs. Throughout this paper, we always denote by $S^E$ the collection of mappings from a finite set $E$ to a set $S$, unless otherwise stated. The elements of $S^E$ are considered as vectors indexed by $E$. An edge with identical ends is called a {\em loop}, and an edge with distinct ends is a {\em link}. A graph $H$ is a {\em subgraph} of $G$ if $V(H)\subseteq V(G)$ and $E(H)\subseteq E(G)$. When $V(H)=V(G)$, the subgraph $H$ of $G$ is referred to as a {\em spanning subgraph}. For an edge subset $S\subseteq E(G)$, $G-S$ is the spanning subgraph obtained by removing the edges in $S$. Denote by $G|S$ the spanning subgraph with edges $S$, that is, $G|S=G-(E(G)\smallsetminus S)$. Conversely, if $H$ is a proper subgraph which does not contain the edge $e \in E(G)$ with  both ends of $e$ are in $V(H)$, then we denote  by $H+e$ the subgraph of $G$ by adding the edge $e$ to $H$.  In addition, if $e$ is a link of $G$, the graph $G/e$ is obtained by contracting $e$, that is, identifying the ends of $e$ to create a single new vertex and then removing $e$.  Note that even the graph $G$ is simple, the contraction $G/e$ may not be simple, and repeated contractions may also result in loops.

Unless otherwise stated, we always choose an arbitrary but fixed reference orientation $D$ for $G$. Notice that every cycle $C$ of $G$ has exactly two possible directions. We arbitrarily choose and fix a direction $o(C)$ for every cycle $C$. Associated with each cycle $C$ of $G$, the orientation $D$ of $G$ induces a map $\eta_{D(C)}$ from the edge set $E(G)$ to the set $\{0,\pm1\}$ defined by
\begin{equation}\label{Primitive-Chain}
\eta_{D(C)}(e):=
\begin{cases}
1,&  \mbox{if  $e\in E(C)$, the direction of $e$ is consistent with $o(C)$};\\
-1,& \mbox{if  $e\in E(C)$, the direction of $e$ is opposite to $o(C)$};\\
0, & \mbox{if  $e\in E(G)\smallsetminus E(C)$}.
\end{cases}
\end{equation}
Remarkably, each function $f:E(G)\to A$ automatically induces an assigning $\alpha_{D,f}$ on $\mathcal{C}(G)$ such that for each cycle $C$ of $G$,
\[
\alpha_{D,f}(C):=\begin{cases}
0,& \mbox{ if } \sum_{e\in E(C)}\eta_{D(C)}(e)f(e)=0;\\
1,& \mbox{ otherwise}.
\end{cases}
\]
We say that $\alpha=\alpha_{D,f}$ is $D$-{\em admissible}, or simply {\em admissible}. 

Although, the map $\eta_{D(C)}$ induced by a cycle $C$ depends on the arbitrarily chosen direction of $C$, different directions of $C$ only cause a possible $-1$ factor to the $\eta_{D(C)}$ value on each edge of $C$. Thus,  whether $\sum_{e\in E(C)}\eta_{D(C)}(e)f(e)$ equals $0$ is independent of the choice of direction of $C$. Consequently, the choice of direction for cycles has no influence on the assigning $\alpha_{D,f}$, and hence does not affect our results.

To derive an explicit formula for the number of $(A,f)$-colorings, the M\"obius Inversion Formula in \cite[Theorem 2.25]{Bondy2008} is needed. It is stated that if the function $g: 2^{E(G)}\to A$ is defined as
\[
g(S):=\sum_{S\subseteq X\subseteq E(G)}h(X) \quad\mbox{ for }\quad \forall\,S\subseteq E(G),
\]
then, for all $S\subseteq E(G)$,
\begin{equation}\label{Inversion}
h(S)=\sum_{S\subseteq X\subseteq E(G)}(-1)^{|X|-|S|}g(X),
\end{equation}
where $h$ is a function from the family $2^{E(G)}$ of all edge subsets of $G$ to $A$. 

Let $f:E(G)\to A$, and $P\big(D(G),f;A\big)$ denote the number of $(A,f)$-colorings of $G$ under $D$. The result below shows that $P\big(D(G),f;A\big)$ can be expanded in terms of  $\alpha_{D,f}$-compatible spanning subgraphs. 
\begin{theorem}[Counting Formula]\label{Counting-Formula}
Let $ f: E(G)\to A$ and $\alpha=\alpha_{D,f}$. Then
\[
P\big(D(G),f;A\big)=\sum_{\substack{\mbox{$H$ is an $\alpha$-compatible}\\ \mbox{spanning subgraph of $G$}}} (-1)^{|E(H)|}|A|^{c(H)}.
\]
\end{theorem}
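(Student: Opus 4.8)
The plan is to count the $(A,f)$-colorings by inclusion–exclusion over the arcs on which the forbidden equality happens to hold, and then to invoke the Möbius Inversion Formula \eqref{Inversion} quoted above. For a coloring $c\colon V(G)\to A$ I would record its \emph{violation set}
\[
B(c):=\{\,e=uv\in E(G): c(v)-c(u)=f(e)\,\},
\]
so that $P\big(D(G),f;A\big)$ is exactly the number of colorings with $B(c)=\emptyset$. Letting $h(S)$ denote the number of colorings with $B(c)=S$ and $g(S)$ the number with $B(c)\supseteq S$, the two are related by $g(S)=\sum_{S\subseteq X\subseteq E(G)}h(X)$, and inversion then yields $h(\emptyset)=\sum_{X\subseteq E(G)}(-1)^{|X|}g(X)$. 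Everything thus reduces to evaluating $g(S)$, the number of colorings satisfying $c(v)-c(u)=f(e)$ for every arc $e=uv\in S$.

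The heart of the argument is computing $g(S)$ in terms of the spanning subgraph $H:=G|S$. I would view the constraints $c(v)-c(u)=f(e)$, $e\in S$, as a linear system of difference equations over $A$ and analyze it one connected component of $H$ at a time. On each component, choosing the value at a single base vertex freely and then propagating along the edges determines the values at all remaining vertices; the only obstruction to consistency is that propagating once around a cycle must return the original value. Going around a cycle $C$, the accumulated constraint is precisely $\sum_{e\in E(C)}\eta_{D(C)}(e)f(e)=0$, which by the definition of $\alpha=\alpha_{D,f}$ is the condition $\alpha(C)=0$. Since the cycle space is spanned by the fundamental cycles, verifying this for every cycle $C$ of $H$ amounts to $H$ being $\alpha$-compatible.

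It follows that if $H=G|S$ is $\alpha$-compatible the system is consistent with exactly one free choice per component, so $g(S)=|A|^{c(H)}$, whereas if $H$ fails to be $\alpha$-compatible some cycle forces an inconsistency and $g(S)=0$. Substituting into the inversion formula, only the $\alpha$-compatible subsets survive, and since $|S|=|E(G|S)|$ and $c(G|S)=c(H)$ the sum collapses to
\[
h(\emptyset)=\sum_{\substack{\mbox{$H$ is an $\alpha$-compatible}\\ \mbox{spanning subgraph of $G$}}}(-1)^{|E(H)|}|A|^{c(H)},
\]
which is the claimed identity. The main obstacle I anticipate is the consistency analysis of $g(S)$: making precise that the sole obstruction to solving the difference system is the vanishing of the signed $f$-sum around every cycle, and that this coincides exactly with the assigning condition $\alpha(C)=0$, including the degenerate case of loops, where the single-edge cycle imposes $f(e)=0$.
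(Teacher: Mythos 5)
Your proposal is correct and follows essentially the same route as the paper: an inclusion--exclusion/M\"obius inversion over the set of arcs where equality is forced, followed by the observation that the resulting constrained count is $|A|^{c(H)}$ when $G|S$ is $\alpha$-compatible and $0$ otherwise. The only cosmetic difference is that you verify the consistency and the count $|A|^{c(H)}$ directly by propagating values along each component, whereas the paper cites Kochol's criterion for solvability and then uses a translation bijection onto the homogeneous ($f\equiv 0$) case; both yield the same evaluation of $g(S)=C_S\big(D(G)|S,f;A\big)$.
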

\begin{proof}
For any $S\subseteq E(G)$, let $C_S\big(D(G),f;A\big)$ be the number of  vertex colorings $c:V(G)\to A$ such that $c(v)-c(u)=f(e)$ for each oriented edge $e=uv\in S$ from $u$ to $v$, and $P_S\big(D(G),f;A\big)$ be the number of vertex colorings $c:V(G)\to A$ such that $c(v)-c(u)=f(e)$ for each oriented edge $e=uv\in S$ and $c(v)-c(u)\ne f(e)$ for each oriented edge $e=uv\in E(G)\smallsetminus S$. We have
\[
C_S\big(D(G)|S,f;A\big)=C_S\big(D(G),f;A\big),\quad\quad P\big(D(G),f;A\big)=P_{\emptyset}\big(D(G),f;A\big)
\]
and 
\[
C_{\emptyset}\big(D(G),f;A\big)=\sum_{S\subseteq E(G)}P_S\big(D(G),f;A\big).
\]
It follows from \eqref{Inversion} that
\begin{equation}\label{Inclusion-Exclusion}
P\big(D(G),f;A\big)=\sum_{S\subseteq E(G)}(-1)^{|S|}C_S\big(D(G),f;A\big)=\sum_{S\subseteq E(G)}(-1)^{|S|}C_S\big(D(G)|S,f;A\big).
\end{equation}
In \cite[Proposition 2.2]{Kochol2002}, it is demonstrated that $G$ admits a vertex coloring $c$ such that $c(v)-c(u)=f(e)$ for any oriented edge $e=uv$ of $G$ if and only if $f$ satisfies the condition $\sum_{e\in E(C)}\eta_{D(C)}(e)f(e)=0$ for every cycle $C$ of $G$. This is further equivalent to $\alpha(C)=0$ for all cycles $C$ of $G$ since $\alpha=\alpha_{D,f}$. Thus, $C_S\big(D(G)|S,f;A\big)=0$ whenever $G|S$ is an $\alpha$-incompatible subgraph of $G$. Then, $P\big(D(G),f;A\big)$ in \eqref{Inclusion-Exclusion} is simplified to the form
\begin{equation}\label{Eq1}
P\big(D(G),f;A\big)=\sum_{\substack{S\subseteq E(G),\\G|S\mbox{ is $\alpha_{G|S}$-compatible}}}(-1)^{|S|}C_S\big(D(G)|S,f;A\big).
\end{equation}

Denote by  $C_S\big(D(G)|S,f_S;A\big)$ the collection of  vertex colorings $c:V(G)\to A$ of $G|S$ such that $c(v)-c(u)=f(e)$ for each oriented edge $e=uv\in S$. Then, fixing a vertex coloring $c\in C_S\big(D(G)|S,f_S;A\big)$, every vertex coloring $c'\in C_S\big(D(G)|S,f_S;A\big)$ gives rise to a unique vertex coloring $c'-c\in C_S\big(D(G)|S,0_S;A\big)$. Conversely, every vertex coloring $c''\in C_S\big(D(G)|S,0_S;A\big)$ also gives rise to a unique vertex coloring $c+c''\in C_S\big(D(G)|S,f_S;A\big)$. Notice
\[
C_S\big(D(G)|S,f;A\big)=|C_S\big(D(G)|S,f_S;A\big)|\quad\And\quad C_S\big(D(G)|S,0;A\big)=|C_S\big(D(G)|S,0_S;A\big)|.
\]
It follows that
\begin{equation*}
C_S\big(D(G)|S,f;A\big)=C_S\big(D(G)|S,0;A\big)=|A|^{c(G|S)}.
\end{equation*}
Together with \eqref{Eq1}, we have
\[
P\big(D(G),f;A\big)=\sum_{\substack{S\subseteq E(G),\\G|S\mbox{ is $\alpha_{G|S}$-compatible}}}(-1)^{|S|}|A|^{c(G|S)},
\]
which completes the proof.
\end{proof}
As a direct consequence of \autoref{Counting-Formula}, the cycle-assigning polynomials corresponding to admissible assignings precisely count the number of $(A,f)$-colorings of  $G$. 
\begin{theorem}\label{Assigning-Polynomial}
Let $\alpha$ be an admissible assigning of $G$. Then $P(G,\alpha;k)=P\big(D(G),f;A\big)$ for every orientation $D$ of $G$, every Abelian group $A$ of order $k$ and every function $f:E(G)\to A$ satisfying $\alpha_{D,f}=\alpha$.
\end{theorem}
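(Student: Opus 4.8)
The plan is to obtain this statement as an immediate corollary of the Counting Formula established in \autoref{Counting-Formula}. The crucial observation is that the notion of an \emph{$\alpha$-compatible spanning subgraph} is purely combinatorial: whether a spanning subgraph $H$ of $G$ qualifies depends only on the pair $(G,\alpha)$ through the condition $\alpha(C)=0$ for every cycle $C$ of $H$, and makes no reference to the orientation $D$, the group $A$, or the function $f$. Consequently, once $\alpha$ is fixed, the family of $\alpha$-compatible spanning subgraphs of $G$, together with the exponents $|E(H)|$ and $c(H)$ attached to each, is completely determined.

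First I would fix an arbitrary orientation $D$, an arbitrary Abelian group $A$ of order $k$, and an arbitrary function $f:E(G)\to A$ subject only to the hypothesis $\alpha_{D,f}=\alpha$. Applying \autoref{Counting-Formula} to these data and writing $\alpha=\alpha_{D,f}$ yields
\[
P\big(D(G),f;A\big)=\sum_{\substack{\mbox{$H$ is an $\alpha$-compatible}\\ \mbox{spanning subgraph of $G$}}}(-1)^{|E(H)|}|A|^{c(H)}.
\]
Since $|A|=k$, the right-hand side is exactly the defining expression for $P(G,\alpha;k)$, so that $P\big(D(G),f;A\big)=P(G,\alpha;k)$ for this particular choice of data.

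The last step is to note that the preceding equality holds for \emph{every} triple $(D,A,f)$ with $|A|=k$ and $\alpha_{D,f}=\alpha$. Because the right-hand value $P(G,\alpha;k)$ depends only on $(G,\alpha,k)$ and not on the chosen triple, the quantities $P\big(D(G),f;A\big)$ all coincide and equal $P(G,\alpha;k)$, which is precisely the assertion of the theorem.

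I do not expect any genuine obstacle here beyond bookkeeping; the proof is a direct specialization of the Counting Formula. The single point deserving a moment's care is the invariance just described: replacing $(D,A,f)$ by any other triple realizing the same assigning $\alpha$ leaves both the index family and the summand exponents untouched, so the substitution $|A|\mapsto k$ converts the Counting Formula verbatim into the definition of $P(G,\alpha;k)$, uniformly in the choices. One may additionally observe that for a group $A$ of order $k$ admitting no $f$ with $\alpha_{D,f}=\alpha$, the statement is vacuously true, so no existence claim about realizing triples is needed.
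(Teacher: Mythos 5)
Your proposal is correct and matches the paper exactly: the paper presents this theorem as a direct consequence of the Counting Formula (\autoref{Counting-Formula}), with the same observation that the $\alpha$-compatible spanning subgraph expansion depends only on $(G,\alpha,k)$ and not on the particular triple $(D,A,f)$. No gap; your extra remark about the vacuous case is harmless bookkeeping.
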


By applying \autoref{Assigning-Polynomial}, we immediately obtain the following corollary. This implies that the number of $(G,f)$-colorings depends only on the assigning $\alpha_{D,f}$, and not on the structure of $A$ and the choice of the orientation $D$.
\begin{corollary}\label{A=B}
Let  $D,D'$ be distinct orientations of $G$, and $A,A'$ be two Abelian groups of the same order and functions $f:E(G)\to A$, $f':E(G)\to A'$. If $\alpha_{D,f}=\alpha_{D',f'}$, then $P\big(D(G),f;A\big)=P\big(D'(G),f';A'\big)$.
\end{corollary}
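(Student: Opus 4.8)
The plan is to funnel both quantities through the purely combinatorial invariant $P(G,\alpha;k)$ supplied by \autoref{Assigning-Polynomial}. The crucial observation is that the cycle-assigning polynomial $P(G,\alpha;k)$ is defined solely in terms of the assigning $\alpha$ and the indeterminate $k$, via the $\alpha$-compatible spanning subgraph expansion; it carries no information about a particular orientation, a particular group, or a particular function $f$. Hence, if I can identify both $P\big(D(G),f;A\big)$ and $P\big(D'(G),f';A'\big)$ with the same evaluation of $P(G,\alpha;k)$, the desired equality drops out at once.

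First I would set $\alpha := \alpha_{D,f} = \alpha_{D',f'}$ and record that $\alpha$ is admissible: by construction any assigning of the form $\alpha_{D,f}$ is $D$-admissible, so the hypothesis of \autoref{Assigning-Polynomial} is met. I would also put $k := |A| = |A'|$, invoking the assumption that $A$ and $A'$ share a common order; this guarantees that the two applications of \autoref{Assigning-Polynomial} evaluate the polynomial at the \emph{same} point, which is exactly what makes the two right-hand sides comparable.

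Next I would apply \autoref{Assigning-Polynomial} twice. Taking the orientation $D$, the group $A$, and the function $f$, which satisfy $\alpha_{D,f} = \alpha$, yields $P(G,\alpha;k) = P\big(D(G),f;A\big)$. Taking instead $D'$, $A'$, $f'$, which satisfy $\alpha_{D',f'} = \alpha$, yields $P(G,\alpha;k) = P\big(D'(G),f';A'\big)$. Chaining these two identities through the common middle term $P(G,\alpha;k)$ gives $P\big(D(G),f;A\big) = P\big(D'(G),f';A'\big)$, as claimed.

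There is essentially no obstacle to overcome: the entire content of \autoref{A=B} is already absorbed into \autoref{Assigning-Polynomial}, and the corollary merely reads off that the number of $(A,f)$-colorings is invariant under any change of orientation, group, or function that preserves the induced assigning. The only points needing a line of verification are the admissibility of $\alpha_{D,f}$, which is immediate from the definition of an induced assigning, and the equality $|A| = |A'|$ of the two orders, which aligns the two evaluations and is supplied by hypothesis.
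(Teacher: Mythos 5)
Your proof is correct and follows exactly the paper's own argument: both identify $P\big(D(G),f;A\big)$ and $P\big(D'(G),f';A'\big)$ with the common value $P(G,\alpha;k)$ for $\alpha=\alpha_{D,f}=\alpha_{D',f'}$ and $k=|A|=|A'|$ via two applications of \autoref{Assigning-Polynomial}. No gaps.
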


We further explore the fundamental properties of cycle-assigning polynomials. Suppose $e\in E(G)$ is a link of $G$. Then the set $\mathcal{C}(G/e)$ of cycles of $G/e$ consists of two parts: 
\[
\mathcal{C}_{-e}(G):=\big\{C\in\mathcal{C}(G/e)\cap\mathcal{C}(G):e\notin E(C)\big\},\;\mathcal{C}_{/e}(G/e):=\big\{C/e:C\in\mathcal{C}(G),e\in E(C)\big\}.
\]
In this context, each assigning $\alpha$ of $G$ induces an assigning $\alpha_{G/e}$ of $G/e$, where $\alpha_{G/e}(C):=\alpha(C)$ for $C\in \mathcal{C}_{-e}(G)$ and $\alpha_{G/e}(C/e):=\alpha(C)$ for $C\in \mathcal{C}(G)$ containing $e$. If $e$ is a loop of $G$, then $\mathcal{C}(G-e)=\mathcal{C}(G/e)\subseteq\mathcal{C}(G)$. In this case, each assigning $\alpha$ of $G$ naturally induces assignings $\alpha_{G-e}$ of $G-e$ and  $\alpha_{G/e}$ of $G/e$, where $\alpha_{G-e}(G)(C)=\alpha_{G/e}(C):=\alpha(C)$ for each $C\in\mathcal{C}(G-e)$.

Next, we introduce a fundamental property concerning admissible assignings.
\begin{lemma}\label{Admissible}
Let $\alpha$ be a $D$-admissible assigning of $G$, $T$ be a spanning forest of $G$ and $f_T:E(T)\to A$. Then there exists a function $f:E(G)\to A$ such that $\alpha_{D,f}=\alpha$ and $f(e)=f_T(e)$ for any $e\in E(T)$.
\end{lemma}
\begin{proof}
Since $\alpha$ is a $D$-admissible assigning of $G$, there exists a function $g:E(G)\to A$ such that $\alpha_{D,g}=\alpha$. For each edge $e\in E(G)\smallsetminus E(T)$, let $C_e$ be the unique cycle (fundamental cycle) in the spanning subgraph $T+e$. Define a new function $f:E(G)\to A$ as follows:
\[
f(e)=\begin{cases}
f_T(e),& \text{ if } e\in E(T);\\
\sum_{h\in E(C_e)}\eta_{D(C_e)}(h)g(h)-\sum_{h\in E(C_e)\smallsetminus\{e\}}\eta_{D(C_e)}(h)f_T(h),& \mbox{ if } e\in E(G)\smallsetminus E(T).
\end{cases}
\]
Consequently, for any $e\in E(G)\smallsetminus E(T)$, we have
\[
\sum_{h\in E(C_e)}\eta_{D(C_e)}(h)f(h)=\sum_{h\in E(C_e)}\eta_{D(C_e)}(h)g(h).
\]
In \cite[Corollary 4.11]{Bondy2008}, it is demonstrated that every cycle $C$ of $G$ can be expressed uniquely as a symmetric difference of fundamental cycles  $C_e$ for all $e\in E(C)\smallsetminus E(T)$.
This implies that
\[
\eta_{D(C)}=\sum_{e\in E(C)\smallsetminus E(T)}\delta_{C,C_e}\eta_{D(C_e)},
\]
where $\delta_{C,C_e}=1$ if the direction of $e$ is either consistent with or opposite to both $o(C)$ and $o(C_e)$, and $\delta_{C,C_e}=-1$ otherwise. Therefore, for every cycle $C$ of $G$, we have
\begin{align*}
\sum_{h\in E(C)}\eta_{D(C)}(h)f(h)&=\sum_{h\in E(C)}\sum_{e\in E(C)\smallsetminus E(T)}\delta_{C,C_e}\eta_{D(C_e)}(h)f(h)\\
&=\sum_{e\in E(C)\smallsetminus E(T)}\delta_{C,C_e}\sum_{h\in E(C)}\eta_{D(C_e)}(h)f(h)\\
&=\sum_{e\in E(C)\smallsetminus E(T)}\delta_{C,C_e}\sum_{h\in E(C_e)}\eta_{D(C_e)}(h)g(h)\\
&=\sum_{h\in E(C)}\eta_{D(C)}(h)g(h).
\end{align*}
It follows that $\alpha_{D,f}=\alpha_{D,g}=\alpha$. This completes the proof.
\end{proof}

We now present a deletion-contraction formula for cycle-assigning polynomials, which provides an inductive approach to proving the Generalized Whitney's Broken Cycle Theorem (see \autoref{Generalized-Broken-Circuit-Theorem}).
\begin{theorem}[Deletion-contraction Formula]\label{DCF}
Let $\alpha$ be an admissible assigning of $G$. For any $e\in E(G)$,  $\alpha_{G-e}$ and $\alpha_{G/e}$ are admissible assignings of $G-e$ and $G/e$, respectively, and 
\[
P(G,\alpha;k)=
\begin{cases}
k^{|V(G)|},&\mbox{ if } E(G)=\emptyset;\\
\frac{k-1}{k}P(G-e, \alpha_{G-e};k),& \mbox{ if } e \mbox{ is a bridge};\\
\alpha(e)P(G-e, \alpha_{G-e};k),& \mbox{ if } e \mbox{ is a loop};\\
P(G-e, \alpha_{G-e};k)-P(G/e, \alpha_{G/e};k),& \mbox{ otherwise}.
\end{cases}
\]
\end{theorem}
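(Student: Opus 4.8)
The plan is to argue directly from the subgraph-expansion definition of $P(G,\alpha;k)$, splitting the defining sum according to whether the distinguished edge $e$ lies in the spanning subgraph $H$. The case $E(G)=\emptyset$ is immediate: the only spanning subgraph is edgeless, with $c(H)=|V(G)|$, so $P(G,\alpha;k)=k^{|V(G)|}$. In every other case I would write $P(G,\alpha;k)=\Sigma_0+\Sigma_1$, where $\Sigma_0$ runs over $\alpha$-compatible spanning subgraphs avoiding $e$ and $\Sigma_1$ over those containing $e$. A subgraph avoiding $e$ is just a spanning subgraph of $G-e$, and the cycles it contains are exactly its cycles in $G-e$, so its $\alpha$-compatibility coincides with $\alpha_{G-e}$-compatibility; hence $\Sigma_0=P(G-e,\alpha_{G-e};k)$ in all cases, and the content of the theorem lies entirely in evaluating $\Sigma_1$.

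For the loop and bridge cases $\Sigma_1$ can be handled by a direct pairing, with no contraction. If $e$ is a loop it is itself a cycle: when $\alpha(e)=1$ every $H\ni e$ is incompatible, so $\Sigma_1=0$ and $P(G,\alpha;k)=\Sigma_0=P(G-e,\alpha_{G-e};k)$; when $\alpha(e)=0$ each compatible $H'\subseteq G-e$ pairs with the compatible subgraph $H'+e$, which has the same component count and opposite sign, so $\Sigma_0$ and $\Sigma_1$ cancel and $P(G,\alpha;k)=0$. Both outcomes are recorded by the factor $\alpha(e)$. If $e$ is a bridge it lies in no cycle, so compatibility ignores $e$; because the ends of $e$ lie in distinct components of $G-e$, adding $e$ to any $H'\subseteq G-e$ merges two components, whence $c(H'+e)=c(H')-1$. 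Pairing $H'$ with $H'+e$ extracts a common factor $(k-1)/k$ from each term of $P(G-e,\alpha_{G-e};k)$, giving the bridge formula.

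The essential case is when $e$ is a link lying on a cycle. Here I would pass from subgraphs of $G$ containing $e$ to subgraphs of $G/e$ via $H\mapsto H/e$; since $E(G/e)=E(G)\smallsetminus\{e\}$ this is a bijection onto the spanning subgraphs of $G/e$, and because $e$ is a link of $H$ one has $c(H/e)=c(H)$ and $|E(H/e)|=|E(H)|-1$, so the sign flips. Formally this makes $\Sigma_1=-P(G/e,\alpha_{G/e};k)$, and combining with $\Sigma_0$ yields the stated identity $P(G-e,\alpha_{G-e};k)-P(G/e,\alpha_{G/e};k)$ — provided the bijection carries $\alpha$-compatible subgraphs exactly onto $\alpha_{G/e}$-compatible ones.

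Verifying this last equivalence is the main obstacle. Using the decomposition $\mathcal{C}(G/e)=\mathcal{C}_{-e}(G)\sqcup\mathcal{C}_{/e}(G/e)$ together with the matching values of $\alpha_{G/e}$, the implication ``$H$ compatible $\Rightarrow$ $H/e$ compatible'' is routine, since every cycle of $H/e$ pulls back to a cycle of $H$ of the same assigned value. The reverse implication is delicate, because contraction can destroy a cycle $C$ of $H$ that avoids $e$ but passes through both ends $u,v$ of $e$: such a $C$ becomes a figure-eight in $H/e$, so $\alpha(C)$ is invisible among the cycles of $H/e$. To control it I would write $C$ as the symmetric difference $C=C_1\triangle C_2$ of the two cycles obtained by adjoining $e$ to the two $u$--$v$ paths composing $C$; both $C_1,C_2$ contain $e$ and so survive in $H/e$. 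This is exactly where the admissibility of $\alpha$ must be used: for $\alpha=\alpha_{D,f}$ one has $\sum_{g\in E(C)}\eta_{D(C)}(g)f(g)=\pm\sum_{g}\eta_{D(C_1)}(g)f(g)\pm\sum_{g}\eta_{D(C_2)}(g)f(g)$, the two contributions of $e$ cancelling, so that $\alpha(C_1)=\alpha(C_2)=0$ forces $\alpha(C)=0$. Hence compatibility of $H/e$ does control the cycles of $H$ broken by contraction, closing the equivalence. I expect this additive relation on cycle values to be the crux of the whole argument; indeed a non-admissible $\alpha$ on a theta-graph already shows the contraction step can fail, so I would take care to have admissibility available precisely at this point, the bookkeeping for loops, bridges and global signs being routine by comparison.
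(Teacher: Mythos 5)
Your decomposition is exactly the one the paper uses: split the defining sum over $\alpha$-compatible spanning subgraphs according to whether $e\in E(H)$, cancel or pair $H$ with $H\pm e$ in the loop and bridge cases, and pass to $H/e$ in the remaining case; your sign and component-count bookkeeping agrees with the paper's throughout. Where you go beyond the paper is the contraction step, and you are right to do so. The paper simply asserts that $H$ is an $\alpha$-compatible spanning subgraph containing $e$ if and only if $H/e$ is $\alpha_{G/e}$-compatible, with no justification; your worry about the reverse implication is well founded, since a cycle $C$ of $H$ avoiding $e$ but meeting both ends of $e$ becomes a figure-eight and leaves no trace in $\mathcal{C}(G/e)$. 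Your repair via $C=C_1\triangle C_2$ and $\eta_{D(C)}=\pm\eta_{D(C_1)}\pm\eta_{D(C_2)}$ is correct, and is essentially the computation the paper only performs later, in \autoref{No-Broken-Circuit}, for theta-subgraphs. Note, however, that this repair genuinely requires $\alpha$ to be admissible, whereas \autoref{DCF} is stated for an arbitrary assigning; as stated, the contraction case is false. Concretely, let $G$ consist of two vertices joined by three parallel edges $e,a,b$ and set $\alpha(\{e,a\})=\alpha(\{e,b\})=0$ and $\alpha(\{a,b\})=1$ (not admissible). A direct computation gives $P(G,\alpha;k)=k^2-k$, while
\[
P(G-e,\alpha_{G-e};k)-P(G/e,\alpha_{G/e};k)=(k^2-2k)-0=k^2-2k;
\]
the discrepancy comes precisely from the subgraph $\{e,a,b\}$, which is not $\alpha$-compatible although its contraction $\{a,b\}$ (two loops, each assigned $0$ by $\alpha_{G/e}$) is $\alpha_{G/e}$-compatible. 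So your proof is sound under the admissibility hypothesis you insist on, and the fact that you cannot drop that hypothesis is a defect not of your argument but of the theorem's stated generality; since the paper only ever applies \autoref{DCF} to admissible assignings, the right fix is to add admissibility to the hypothesis of the ``otherwise'' clause.
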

\begin{proof}
Clearly, $\alpha_{G-e}$ is admissible for any $e\in E(G)$, and $\alpha_{G/e}$ is admissible if $e$ is a loop. When $e$ is a link, from \autoref{Admissible}, we can choose a function $f: E(G)\to A$ such that $\alpha_{D,f}=\alpha$ and $f(e)=0$. Then $\alpha_{G/e}=\alpha_{D,f_{-e}}$, where $f_{-e}$ is the restriction of $f$ to $E(G)\smallsetminus\{e\}$. Consequently, $\alpha_{G/e}$ is admissible .

Let $\mathcal{S}_{e}$ be the family of $\alpha$-compatible spanning subgraphs of $G$  containing $e$ and $\mathcal{S}_{-e}$ be the family of $\alpha$-compatible spanning subgraphs of $G$ that do not contain $e$. If $e$ is a bridge, then a subgraph $H$ containing $e$ is an $\alpha$-compatible spanning subgraph of $G$ if and only if $H-e$ is an $\alpha$-compatible spanning subgraph of $G$. Therefore, we conclude that every $\alpha$-compatible spanning subgraph $H$ of $G$ containing $e$ gives rise to a unique $\alpha$-compatible spanning subgraph $H-e$ of $G$ that does not contain $e$, and vice versa. Then we have
\[
P(G,\alpha;k)=\sum_{H\in\mathcal{S}_{-e}}(-1)^{|E(H+e)|}k^{c(H+e)}+\sum_{H\in\mathcal{S}_{-e}}(-1)^{|E(H)|}k^{c(H)}.
\]
Since $e$ is a bridge, $c(H+e)=c(H)-1$ for any $H\in\mathcal{S}_{-e}$. Hence, we deduce
\[
P(G,\alpha;k)=\frac{k-1}{k}P(G-e, \alpha_{G-e};k).
\]

If $e$ is a loop, we need to consider both cases: $\alpha(e)=0$ and $\alpha(e)=1$. For the former case, similarly to the case where $e$ is a bridge, we can easily derive $P(G,\alpha;k)=0$. For the latter case, it is clear that $H$ is an $\alpha$-compatible spanning subgraph of $G$ if and only if $H$ is an $\alpha_{G-e}$-compatible spanning subgraph of $G-e$. So, we arrive at $P(G,\alpha;k)=P(G-e, \alpha_{G-e};k)$.

If $e$ is neither a bridge nor a loop, then $H$ is an $\alpha$-compatible spanning subgraph of $G$ containing no $e$ if and only if $H$ is an $\alpha_{G-e}$-compatible spanning subgraph of $G-e$, and $H$ is an $\alpha$-compatible spanning subgraph of $G$ containing $e$ if and only if $H/e$ is an $\alpha_{G/e}$-compatible spanning subgraph of $G/e$. Therefore, we have
\begin{align*}
P(G,\alpha;k)&=\sum_{H\in\mathcal{S}_{-e}}(-1)^{|E(H)|}k^{c(H)}+\sum_{H\in\mathcal{S}_{e}}(-1)^{|E(H)|}k^{c(H)}\\
&=P(G-e, \alpha_{G-e};k)+\sum_{H\in\mathcal{S}_{e}}(-1)^{|E(H/e)|+1}k^{c(H/e)}\\
&=P(G-e, \alpha_{G-e};k)-P(G/e, \alpha_{G/e};k),
\end{align*}
which completes the proof.
\end{proof}

In addition, the general cycle-assigning polynomial $P(G,\alpha;k)$ has the following simple decomposition formula.
\begin{theorem}\label{Decomposition}
Let $\alpha$ be an assigning of $G$ and $G=G_1\sqcup G_2\sqcup\cdots\sqcup G_i$, where $G_j$ is the component of $G$ for $j=1,2,\ldots,i$. Then
\[
P(G,\alpha;k)=\prod_{j=1}^iP(G_j,\alpha_{G_j};k).
\]
\end{theorem}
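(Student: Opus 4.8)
The plan is to prove the multiplicativity directly from the defining sum of $P(G,\alpha;k)$, reorganizing the sum over $\alpha$-compatible spanning subgraphs of $G$ into a product of the corresponding sums over the components $G_j$. First I would record the structural observation underlying everything: since a cycle of $G$ is by definition a connected $2$-regular subgraph, it is contained in exactly one component $G_j$. Hence $\mathcal{C}(G)=\bigsqcup_{j=1}^i\mathcal{C}(G_j)$, and for each $j$ the restriction $\alpha_{G_j}$ is precisely $\alpha$ evaluated on the cycles lying in $G_j$.

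Next I would set up the bijection between the two index sets. A spanning subgraph $H$ of $G$ both determines and is determined by the tuple $(H_1,\ldots,H_i)$, where $H_j:=G_j|(E(H)\cap E(G_j))$. Since $H$ is spanning, $V(H)=V(G)$, so $V(H_j)=V(G_j)$ and each $H_j$ is a spanning subgraph of $G_j$; conversely any tuple of spanning subgraphs of the $G_j$ assembles into a spanning subgraph $H=H_1\sqcup\cdots\sqcup H_i$ of $G$. This correspondence is a bijection.

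The key step is to verify that $\alpha$-compatibility is local to the components under this bijection: $H$ is $\alpha$-compatible if and only if every $H_j$ is $\alpha_{G_j}$-compatible. This follows from the structural observation, since every cycle $C$ of $H$ is connected and hence lies in a single $H_j$, where $\alpha(C)=\alpha_{G_j}(C)$; thus no cycle of $H$ violates the assigning exactly when no cycle of any $H_j$ does. I expect this to be the only place where the definition of a cycle (as opposed to an arbitrary subgraph) is genuinely used, and it constitutes the main, though modest, conceptual point of the argument.

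Finally I would combine the additivity of the two graph statistics across a disjoint union, namely $|E(H)|=\sum_{j=1}^i|E(H_j)|$ and $c(H)=\sum_{j=1}^i c(H_j)$, with the bijection to factor the summand as
\[
(-1)^{|E(H)|}k^{c(H)}=\prod_{j=1}^i(-1)^{|E(H_j)|}k^{c(H_j)}.
\]
Summing over all $\alpha$-compatible spanning subgraphs $H$, which by the compatibility equivalence is the same as summing independently over the $\alpha_{G_j}$-compatible spanning subgraphs $H_j$ of each $G_j$, the distributive law for finite sums converts the single sum into the product $\prod_{j=1}^i P(G_j,\alpha_{G_j};k)$, as required. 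No genuine obstacle arises here; the care lies entirely in making the bijection and the compatibility equivalence precise.
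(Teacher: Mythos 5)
Your proposal is correct and follows essentially the same route as the paper's proof: the paper likewise decomposes each $\alpha$-compatible spanning subgraph of $G$ into the disjoint union of $\alpha_{G_j}$-compatible spanning subgraphs of the components and multiplies the resulting sums. You merely spell out the details (locality of cycles, additivity of $|E(H)|$ and $c(H)$) that the paper leaves implicit.
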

\begin{proof}
Since $G$ decomposes into the union of the components $G_j$ of $G$, each $\alpha$-compatible spanning subgraph of $G$ is partitioned into the disjoint union of the $\alpha_{G_j}$-compatible spanning subgraph of $G_j$, and vice versa. Therefore, we have $P(G,\alpha;k)=\prod_{j=1}^iP(G_j,\alpha_{G_j};k)$.
\end{proof}
At the end of the section, we list the cycle-assigning polynomials for the empty graph, a tree and the cycle on $n$ vertices.
\begin{example}
{\rm Note that the following assignings are admissible obviously.
\begin{itemize}
\item [{\rm(1)}] Let $O_n$ be the empty graph on $n$ vertices. Then $\alpha\equiv0$ and
\[
P(O_n,0;k)=k^n.
\]
\item [{\rm(2)}] Let $T_n$ be a tree on $n$ vertices. Then $\alpha\equiv0$ and
\[
P(T_n,0;k)=k(k-1)^{n-1}=\sum_{i=1}^n(-1)^{n-i}\binom{n-1}{i-1}k^i.
\]
\item [{\rm(3)}] Let $C_n$ be a cycle on $n$ vertices. Combining part (2) and \autoref{DCF}, we have
\[
P(C_n,\alpha;k)=
\begin{cases}
(k-1)^n+(-1)^n(k-1),&\mbox{ if } \alpha(C_n)=0;\\
(k-1)^n+(-1)^{n+1},&\mbox{ if } \alpha(C_n)=1.
\end{cases}
\]
\end{itemize}
}
\end{example}
\section{Polynomials counting nowhere-zero chains associated with homomorphisms}\label{Sec3-0}
In this section, we mainly clarify the close relationship between cycle-assigning polynomials and $\alpha$-assigning polynomials, which was first introduced by Kochol \cite{Kochol2022} to count nowhere-zero chains in graphs--nonhomogeneous analogues of nowhere-zero flows. Most recently,  Kochol \cite{Kochol2024} further extended the approach to regular matroids.  For this purpose, we first review some basic definitions about regular chain groups associated to regular matroids. For additional properties of regular chain groups, refer to the references  \cite{Arrowsmith-Jaeger1982,Tutte1956}.

A regular matroid $M=M(U)$ on a ground set $E$ is represented by a totally unimodular matrix $U$. A more general setting and background on matroids can be found in Oxley's book \cite{Oxley2011}. A {\em chain} on $E$ over an Abelian group $A$ is a map $f\in A^E$, and its support $\sigma(f)$ is $\sigma(f):=\{f(e)\ne 0\mid e\in E\}$. We say that $f$ is {\em proper} if $\sigma(f)=E$.  Associated with $U$ or $M(U)$, a {\em regular chain group} $N$ on $E$ over $\mathbb{Z}$ is the set of integral vectors orthogonal to each row of $U$. The set of chains orthogonal to every chain of $N$ forms a chain group called {\em orthogonal to} $N$, denoted by $N^\perp$. Actually, $N^\perp$ consists of  the integral vectors from the linear combinations of the rows of $U$. A chain $f$ of $N$ is {\em elementary} if there is a no nonzero chain $g$ of $N$ such that $\sigma(g)\subset \sigma(f)$. An elementary chain $f$ of $N$ is said to be {\em primitive} if  $f(e)\in\{\pm1,0\}$ for all $e\in E$. In this section, we always assume that a regular chain group $N$ is associated with a totally unimodular matrix $U(N)$ representing a regular matroid $M=M(N)$.

Let $A$ be an additive Abelian group. We shall consider $A$ as a (right) $\mathbb{Z}$-module such that the scalar multiplication $a\cdot z$ of $a\in A$ by $z\in\mathbb{Z}$ equals zero if $z=0$, $\sum_1^za$ if $z>0$, and $\sum_1^{-z}(-a)$ if $z<0$. Analogously, if $a\in A$ and $f\in \mathbb{Z}^E$, then define $a\cdot f\in A^E$ such that $(a\cdot f)(e)=a\cdot f(e)$ for every $e\in E$. We shall denote by $A(N)$ the $A$-chain group generated by the regular chain group $N$, that is,
\[
A(N):=\Big\{\sum_{i=1}^ma_i\cdot f_i\mid a_i\in A,f_i\in N,m\ge 1\Big\}.
\]
Let $A[N]$ be the collection of all proper $A$-chains from $A(N)$, i.e.,
\[
A[N]:=\big\{f\in A(N)\mid \sigma(f)=E\big\}.
\]
From \cite[Proposition 1]{Arrowsmith-Jaeger1982}, we have
\begin{equation}\label{Regular-Chain-Element}
g\in A^E\text{ is from } A(N)\Longleftrightarrow \sum_{e\in E}g(e)\cdot f(e)=0 \text{ for any } f\in N^\perp.
\end{equation}

Let $\psi$ be a homomorphism from a regular chain group $N$ to $A$.  Define by 
\[
A_\psi[N]:=\Big\{g\in (A-0)^E\mid \sum_{e\in E}g(e)\cdot f(e)=\psi(f) \text{ for all } f\in N\Big\}.
\]
Each element from $A_\psi[N]$ is called a {\em proper $(N,\psi)$-chain}.

Let $\mathcal{P}(N)$ be the set of primitive chains of a regular chain group $N$, and $\mathcal{C}(M)$ be the family of circuits of $M=M(N)$. By \cite{Tutte1956}, $\mathcal{P}(N)$ and $\mathcal{C}(M)$ have the following relationship
\begin{equation}\label{Primitive-Circuit}
\mathcal{C}(M)=\big\{\sigma(c)\mid c\in\mathcal{P}(N)\big\}.
\end{equation}

An {\em assigning} of $M$ is any mapping $\alpha$ from $\mathcal{C}(M)$ to $\{0,1\}$. Write $\alpha\equiv0$ if $\alpha(C)=0$ for all $C\in\mathcal{C}(M)$ or $\mathcal{C}(M)=\emptyset$. Let $e\in E$. It follows from the relation $\mathcal{C}(M-e)\subseteq\mathcal{C}(M)$ that an assigning $\alpha$ of $M$ naturally gives rise to an assigning $\alpha[-e]$ of $M-e$ for which
\[
\alpha[-e](C)=\alpha(C),\quad C\in\mathcal{C}(M-e).
\]
By \cite[Proposition 3.1.10]{Oxley2011}, we know that
\[
\text{if $C\in\mathcal{C}(M/e)$, then either $C\in\mathcal{C}(M)$ or $C\cup e\in\mathcal{C}(M)$.}
\]
Similarly, the assigning $\alpha$ of $M$ also induces an assigning $\alpha[/e]$ of $M/e$ such that $\alpha[/e](C)=\alpha(C)$ if $C\in\mathcal{C}(M/e)\cap\mathcal{C}(M)$, and $\alpha[/e](C)=\alpha(C\cup e)$ otherwise. 

From \eqref{Primitive-Circuit}, the homomorphism $\psi$ automatically leads to an assigning $\alpha_{N,\psi}$ of $M$ such that for every $c\in\mathcal{P}(N)$,
\[  
\alpha_{N,\psi}\big(\sigma(c)\big):=
\begin{cases}
0,& \text{if } \psi(c)=0;\\
1,& \text{if } \psi(c)\ne0.
\end{cases}
\]
An assigning $\alpha$ of $M$ is  said to be {\em homogeneous} if  there exists a homomorphism $\psi:N\to A$ such that $\alpha=\alpha_{N,\psi}$. Furthermore, $\psi$ and $\alpha=\alpha_{N,\psi}$  are {\em proper} if $A_\psi[N]\ne\emptyset$.

In 2024, Kochol showed in \cite[Theorem 1]{Kochol2022} that there exists a polynomial $p(M,\alpha;k)$ counting the number of nowhere-zero chains in $A_\psi[N]$. The polynomial $p(M,\alpha;k)$ is called an {\em $\alpha$-assigning polynomial} of $M$ by Kochol.
\begin{theorem}[\cite{Kochol2024}, Theorem 1]\label{Kochol-Result1} Suppose $M$ is a regular matroid on $E$ and $\alpha$ is a homogeneous assigning of $M$. Then, there exists a polynomial $p(M,\alpha;k)$, such that $p(M,\alpha;k)=|A_{\psi}[N]|$ for every regular chain group $N$ on $E$ and $M(N)=M$, every Abelian group $A$ of order $k$, and every homomorphism $\psi: N\to A$ satisfying $\alpha_{N,\psi}=\alpha$. If $\alpha$ is proper, then $p(M,\alpha;k)$ has degree $r(M)$, and $p(M,\alpha;k)=0$  otherwise. Furthermore, for any $e\in E$, $\alpha[-e]$ and $\alpha[/e]$ are homogeneous assignings of $M-e$ and  $M/e$, respectively, and
\begin{equation*}
p(M,\alpha;k)=
\begin{cases}
1,&\text{if $E=\emptyset$};\\
(k-1)p\big(M-e,\alpha[-e];k\big),&\text{if $e$ is a bridge};\\
\alpha(e)p\big(M-e,\alpha[-e];k\big),&\text{if $e$ is a loop};\\
p\big(M-e,\alpha[-e];k\big)-p\big(M/e,\alpha[/e];k\big),&\text{otherwise}.\\
\end{cases}
\end{equation*}
\end{theorem}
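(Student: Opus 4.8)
The plan is to transport the proof of \autoref{Counting-Formula} from colorings to chains, using the regular chain group identities \eqref{Regular-Chain-Element} and \eqref{Primitive-Circuit} in place of the elementary coloring argument. Fix a regular chain group $N$ on $E$ with $M(N)=M$, an Abelian group $A$ of order $k$, and a homomorphism $\psi\colon N\to A$ with $\alpha_{N,\psi}=\alpha$. For each $S\subseteq E$ let $N_\psi(S)$ be the number of chains $g\in A^E$ satisfying $\sum_{e\in E}g(e)\cdot f(e)=\psi(f)$ for all $f\in N$ and $\sigma(g)\subseteq S$. A chain belongs to $A_\psi[N]$ exactly when it is proper, i.e. $\sigma(g)=E$, so applying the M\"obius Inversion Formula \eqref{Inversion} over the Boolean lattice of supports yields
\[
|A_\psi[N]|=\sum_{S\subseteq E}(-1)^{|E|-|S|}N_\psi(S).
\]

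The core of the argument, and what I expect to be the main obstacle, is the evaluation of $N_\psi(S)$. First I would show that the defining affine system is solvable if and only if $E\smallsetminus S$ is $\alpha$-compatible. A chain supported in $S$ sees $\psi$ only through the values $f(e)$ with $e\in S$, so the system is consistent precisely when every $f\in N$ with $\sigma(f)\subseteq E\smallsetminus S$ satisfies $\psi(f)=0$; by \eqref{Primitive-Circuit} such chains are generated by primitive chains whose supports are exactly the circuits contained in $E\smallsetminus S$, and the hypothesis $\alpha_{N,\psi}=\alpha$ turns the condition $\psi(f)=0$ into $\alpha(C)=0$ for every circuit $C\subseteq E\smallsetminus S$. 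When the system is consistent, fixing one solution and subtracting it identifies its solution set with the group $\{g\in A^E:\sigma(g)\subseteq S,\ \sum_{e}g(e)\cdot f(e)=0\ \forall f\in N\}$, exactly as in the translation step of \autoref{Counting-Formula}; by \eqref{Regular-Chain-Element} this is the group of chains in $A(N^\perp)$ supported in $S$, whose order is $|A|^{\,r(M)-r(E\smallsetminus S)}$ by the rank formula for regular chain groups. Hence $N_\psi(S)=|A|^{\,r(M)-r(E\smallsetminus S)}$ if $E\smallsetminus S$ is $\alpha$-compatible and $N_\psi(S)=0$ otherwise.

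Substituting this and reindexing by the complement $T=E\smallsetminus S$ gives
\[
|A_\psi[N]|=\sum_{\substack{T\subseteq E\\ T\ \text{is}\ \alpha\text{-compatible}}}(-1)^{|T|}\,|A|^{\,r(M)-r(T)},
\]
which depends only on $\alpha$ and on $k=|A|$. This simultaneously produces the polynomial $p(M,\alpha;k)$ and proves the asserted evaluation for every admissible $N$, $A$ and $\psi$. The degree statement then follows at once: the term $T=\emptyset$ contributes $k^{r(M)}$ with coefficient $1$, so $\deg p=r(M)$ whenever $p\not\equiv0$; and if $\alpha$ is not proper then $A_\psi[N]=\emptyset$ for every $A$, so $p$ vanishes for infinitely many $k$ and is therefore identically zero.

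For the recursion I would argue directly from the displayed sum, in parallel with \autoref{DCF}, by splitting the $\alpha$-compatible sets $T$ according to whether they contain $e$. When $e$ is a coloop the two halves pair up to produce the factor $k-1$; when $e$ is a loop the circuit $\{e\}$ forces $T\not\ni e$ if $\alpha(e)=1$ and otherwise pairs the sets off to cancel, yielding the factor $\alpha(e)$; and when $e$ is ordinary the $T\not\ni e$ part reproduces $p(M-e,\alpha[-e];k)$ while the $T\ni e$ part reproduces $-p(M/e,\alpha[/e];k)$, using $r(T)=r_{M/e}(T\smallsetminus e)+1$ and the correspondence between circuits of $M/e$ and circuits of $M$. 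The subsidiary claim that $\alpha[-e]$ and $\alpha[/e]$ are homogeneous is obtained by (co)restricting $\psi$ to the chain groups representing $M-e$ and $M/e$, so that the recursion is again a statement purely about compatible sets. All the genuine content thus sits in the evaluation of $N_\psi(S)$, where matching solvability to $\alpha$-compatibility of $E\smallsetminus S$ and computing the homogeneous count through regular chain group duality is the step that uses the full structure of $N$ and $\psi$.
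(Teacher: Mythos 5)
There is nothing in the paper to compare your argument against: Theorem \ref{Kochol-Result1} is quoted verbatim from \cite{Kochol2024} and is used as an imported black box; the author supplies no proof of it. So your reconstruction can only be judged on its own terms.

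As an outline it is essentially the right argument, and it is close in spirit to how Kochol (and, before him, Arrowsmith and Jaeger) actually proceed: inclusion--exclusion over supports, identification of the consistent affine systems with the subsets whose complements are $\alpha$-compatible, and a homogeneous count of $|A|^{r(M)-r(E\smallsetminus S)}$, after which existence of the polynomial, the degree statement, and the deletion--contraction recursion all fall out of the subset expansion exactly as in Theorem \ref{DCF}. The difficulty is that all of the mathematical content is concentrated in the two claims you state in a single sentence and do not prove. First, the sufficiency half of ``the system is consistent precisely when every $f\in N$ with $\sigma(f)\subseteq E\smallsetminus S$ satisfies $\psi(f)=0$'' is not formal: vanishing on the chains supported in $E\smallsetminus S$ only says that $\psi$ descends to the projected chain group $N\cdot S\subseteq\mathbb{Z}^S$; you must still realize an arbitrary homomorphism $N\cdot S\to A$ by pairing against some $g\in A^{S}$, and that requires $\mathbb{Z}^S/(N\cdot S)$ to be torsion-free --- true because $N$ is regular (totally unimodular), false for general integral chain groups. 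Second, the count of chains of $A(N^{\perp})$ supported in $S$ as $|A|^{r(M)-r(E\smallsetminus S)}$, and the identity $(N^{\perp})^{\perp}=N$ you implicitly use to invoke \eqref{Regular-Chain-Element}, are again regularity statements (essentially the enumeration results of \cite{Arrowsmith-Jaeger1982}). You correctly flag this block as ``the main obstacle,'' but flagging it is not discharging it: as written, the proposal reduces Kochol's theorem to two unproved lemmas about regular chain groups. A smaller point: the degree claim does not follow merely from the $T=\emptyset$ term contributing $k^{r(M)}$, since loop subsets also contribute to that coefficient; one needs the involution $T\mapsto T\bigtriangleup\{e\}$ for a loop $e$ with $\alpha(e)=0$ to see that either the whole sum vanishes or $T=\emptyset$ is the only rank-zero compatible set, which is exactly the proper/improper dichotomy.
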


Below we shall explore some applications of the $\alpha$-assigning polynomial to colorings and tensions of $G$. The {\em incidence matrix} of $D(G)$ is the $|V(D)|\times |A(D)|$ matrix $M_{D(G)}:=(m_{va})$ whose rows and columns are indexed by the vertices and arcs, where, for a vertex $v$ and arc $a$
\[
m_{va}:=\begin{cases}
1, & \text{ if arc } a \text{ is a link and vertex } v \text{ is the head of } a;\\
-1,& \text{ if arc } a \text{ is a link and vertex } v \text{ is the tail of } a;\\
0,& \text{ otherwise}.
\end{cases}
\]
If $c\in A^{V(G)}$, then the {\em coboundary} of $c$ is the map $\delta c\in A^{E(G)}$ given by 
\[
\delta c(e):=\sum_{v\in V(G)}m_{vD(e)}c(v),
\]
where $D(e)$ denotes the corresponding arc of the edge $e$ under $D$. In this sense, we call a mapping $t\in A^{E(G)}$ a {\em tension} or an {\em$A$-tension} if there exists a vertex coloring  $c\in  A^{V(G)}$ such that $t=\delta c$. Similar to $(A,f)$-coloring associated with a mapping $f\in A^{E(G)}$, a tension $t$ is referred to as an {\em$(A,f)$-tension} if it satisfies $t(e)\ne f(e)$ for all $e\in E(G)$. In particular, taking $f\equiv0$, the $(A,0)$-tension is exactly the {\em nowhere-zero $A$-tension}. It is clear that all $A$-tensions form an (additive) Abelian subgroup of $A^{E(G)}$, denoted by $\Delta(G,A)$.   Let $\Delta_f(G,A)$ be the set of all $(A,f)$-tensions of $G$ for $f\in A^{E(G)}$. The number of nowhere-zero tensions is evaluated by a polynomial $\tau(G;k)$ in $k$ introduced by Tutte \cite{Tutte1954}. This was developed and  formally named the {\em tension polynomial} of $G$ by Kochol \cite{Kochol2002} in  2002. Motivated by their work, a natural question arises:
\begin{itemize}
\item Is there a polynomial function of $|A|$ that generalizes the tension polynomial and counts $(A,f)$-tensions in graphs?
\end{itemize}
The answer to the question was implicit in the work of Kochol \cite{Kochol2024}. By applying \autoref{Kochol-Result1},  we shall present an explicit answer in \autoref{f-Tension-Polynomial}.

Let $M(G)$ be the cycle matroid of $G$ represented by the incidence matrix $M_{D(G)}$ ($M(G)$ is a regular matroid, see \cite[Proposition 5.1.5]{Oxley2011}), and $\Gamma(G)$ be the regular chain group on $E(G)$ associated with $M_{D(G)}$. From \cite{Tutte1956},  the mapping $\eta_{D(C)}$ defined in \eqref{Primitive-Chain} is the primitive chain of  $\Gamma(G)$ for every cycle $C$ of $G$, and $\Gamma(G)$ is generated by $\big\{\eta_{D(C)}\mid C\in\mathcal{C}(G)\big\}$. In addition, the regular chain group $\Delta(G)$ on $E(G)$ over $\mathbb{Z}$ consisting of all linear combinations of rows from the matrix $M_{D(G)}$ is exactly orthogonal
to  $\Gamma(G)$, i.e., $\Delta(G)=\Gamma(G)^\perp$. Again, from \cite{Tutte1956}, the chain group $\Delta(G,A)$ can be generated by $\Delta(G)$. Then we have the following result.
\begin{corollary}\label{f-Tension-Polynomial}
Let $\alpha$ be an admissible assigning of $G$. Then $\tau(G,\alpha;k)$ and $p\big(M(G),\alpha_{\Gamma(G),\psi};k\big)$ are the same polynomial, and they satisfy that
\[
\tau(G,\alpha;k)=p\big(M(G),\alpha_{\Gamma(G),\psi};k\big)=|A_\psi[\Gamma(G)]|=|\Delta_f(G,A)|
\]
for every Abelian group $A$ of order $k$, every function $f:E(G)\to A$ satisfying $\alpha_{D,f}=\alpha$, and every homomorphism $\psi:\Gamma(G)\to A$ such that  $\psi(\eta_{D(C)})=-\sum_{e\in E(G)}f(e)\cdot\eta_{D(C)}(e) $ for any cycle $C$ of $G$. Furthermore, $\tau(G,\alpha;k)$ has degree $|V(G)|-c(G)$ if $\alpha(e)=1$ for any loop $e$ of $G$ and equals zero otherwise, and $\tau(G,\alpha;k)$ also satisfies
\begin{equation*}
\tau(G,\alpha;k)=
\begin{cases}
1,&\text{if $E(G)=\emptyset$};\\
(k-1)\tau(G-e,\alpha_{G-e};k),&\text{if $e$ is a bridge};\\
\alpha(e)\tau(G-e,\alpha_{G-e};k),&\text{if $e$ is a loop};\\
\tau(G-e,\alpha_{G-e};k)-\tau(G/e,\alpha_{G/e};k),&\text{otherwise}.\\
\end{cases}
\end{equation*}
\end{corollary}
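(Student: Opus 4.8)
The plan is to prove the displayed chain of equalities by exhibiting two bijections and then invoking the counting results already established. First I would fix an orientation $D$, an Abelian group $A$ of order $k$, a function $f$ with $\alpha_{D,f}=\alpha$, and the homomorphism $\psi\colon\Gamma(G)\to A$ determined on the generators $\eta_{D(C)}$ by $\psi(\eta_{D(C)})=-\sum_{e}f(e)\cdot\eta_{D(C)}(e)$. Because $\Gamma(G)$ is generated by the primitive chains $\eta_{D(C)}$ and $\psi$ is a homomorphism, this prescription is well defined, and by \eqref{Primitive-Circuit} together with the definition of $\alpha_{N,\psi}$ one checks that $\alpha_{\Gamma(G),\psi}(\sigma(\eta_{D(C)}))=0$ exactly when $\sum_e f(e)\eta_{D(C)}(e)=0$, i.e.\ when $\alpha(C)=0$; hence $\alpha_{\Gamma(G),\psi}=\alpha$ under the circuit--cycle identification and \autoref{Kochol-Result1} yields $p(M(G),\alpha;k)=|A_\psi[\Gamma(G)]|$. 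The decisive bijection is $t\mapsto t-f$: setting $g=t-f$, the defining identity $\sum_e g(e)\cdot\eta_{D(C)}(e)=\psi(\eta_{D(C)})$ is precisely $\sum_e t(e)\eta_{D(C)}(e)=0$ for every cycle $C$, which by \eqref{Regular-Chain-Element} applied to $N=\Delta(G)=\Gamma(G)^{\perp}$ characterizes $t$ as an $A$-tension, while $g$ being nowhere zero is exactly the condition $t(e)\ne f(e)$. Thus $t\mapsto t-f$ is a bijection from $\Delta_f(G,A)$ onto $A_\psi[\Gamma(G)]$, giving $|\Delta_f(G,A)|=|A_\psi[\Gamma(G)]|=p(M(G),\alpha;k)$.

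Next I would link $\tau$ to $(A,f)$-tensions through the coboundary map $\delta\colon A^{V(G)}\to A^{E(G)}$. Since $\delta c(e)=c(v)-c(u)$ for each arc $e=uv$, a coloring $c$ is an $(A,f)$-coloring if and only if $\delta c\in\Delta_f(G,A)$; moreover $\delta$ is a homomorphism onto $\Delta(G,A)$ whose kernel is the set of colorings constant on each component, of size $k^{c(G)}$. Consequently every $(A,f)$-tension has exactly $k^{c(G)}$ preimages, all of them $(A,f)$-colorings, so $P\big(D(G),f;A\big)=k^{c(G)}|\Delta_f(G,A)|$. Combining this with \autoref{Assigning-Polynomial} (which gives $P(G,\alpha;k)=P(D(G),f;A)$) and the factorization $P(G,\alpha;k)=k^{c(G)}\tau(G,\alpha;k)$ yields $\tau(G,\alpha;k)=|\Delta_f(G,A)|$ for every $k$. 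Taking $A=\mathbb{Z}/k\mathbb{Z}$ for each positive integer $k$ shows that the two polynomials $\tau(G,\alpha;k)$ and $p(M(G),\alpha;k)$ agree at every positive integer and therefore coincide, completing the chain of equalities.

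For the recurrence I would substitute $P(G,\alpha;k)=k^{c(G)}\tau(G,\alpha;k)$ into the four cases of \autoref{DCF}, using $c(G-e)=c(G)+1$ for a bridge and $c(G-e)=c(G/e)=c(G)$ otherwise, to read off directly the stated recurrence for $\tau$ (equivalently it transports from the recurrence of \autoref{Kochol-Result1} via $M(G)-e=M(G-e)$ and $M(G)/e=M(G/e)$). The vanishing is then immediate: if some loop, with edge $e$ and cycle $C$, has $\alpha(C)=0$, the loop case applied to $e$ gives $\tau(G,\alpha;k)=\alpha(e)\,\tau(G-e,\alpha_{G-e};k)=0$. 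If instead every loop has $\alpha=1$, I would induct on $|E(G)|$: for a bridge, $\tau(G-e)$ has degree $|V(G)|-c(G)-1$ (since $c(G-e)=c(G)+1$) and the factor $(k-1)$ restores the degree $|V(G)|-c(G)$; for a loop, $\tau$, $|V|$ and $c$ are unchanged; and otherwise $\tau(G-e)$ has degree $|V(G)|-c(G)$ (deleting a link creates no loops, so the hypothesis is inherited), while $\tau(G/e)$ either vanishes or has the strictly smaller degree $|V(G)|-c(G)-1$, so the leading term of $\tau(G-e)$ survives the subtraction. I expect the \emph{main obstacle} to be the bookkeeping in this last case: contracting a link can turn a parallel edge into a loop whose induced value under $\alpha_{G/e}$ may be $0$, but this only forces $\tau(G/e)=0$, which is harmless for the degree count. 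The care required is to run the induction so that the degree statement and the vanishing statement are carried simultaneously, and to verify that $\alpha_{G-e}$ and $\alpha_{G/e}$ are indeed the assignings induced by deletion and contraction.
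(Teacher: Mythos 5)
Your argument for the displayed chain of equalities is essentially the paper's: the map $t\mapsto t-f$ is exactly the composite of the paper's \autoref{Bijection} (the shift $g\mapsto g-f$ onto the set $A_f[G]$) and \autoref{A1=B1} (the identification $A_f[G]=A_\psi[\Gamma(G)]$, verified on the generators $\eta_{D(C)}$ via \eqref{Regular-Chain-Element}), the coboundary-fiber count $P\big(D(G),f;A\big)=k^{c(G)}|\Delta_f(G,A)|$ is the same as the paper's assertion that each $(A,f)$-tension has exactly $|A|^{c(G)}$ coloring preimages, and the conclusion that two polynomials agreeing at infinitely many integers coincide is identical. Where you genuinely diverge is the degree/vanishing claim: the paper obtains it by citing \cite[Theorem 3]{Kochol2024} (a homogeneous assigning is proper iff every loop gets value $1$) together with the degree statement in \autoref{Kochol-Result1}, whereas you run a direct induction on $|E(G)|$ through the deletion--contraction recurrence. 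Your induction is sound --- the only point needing care is the one you flag, and it is harmless: in the link case $\tau(G/e,\alpha_{G/e};k)$ has degree at most $r(G/e)=|V(G)|-1-c(G)$ straight from its defining sum, so it cannot cancel the leading term of $\tau(G-e,\alpha_{G-e};k)$ whether or not contraction creates a loop of value $0$; and since the recurrence of \autoref{DCF} holds for arbitrary assignings, you never actually need to verify that $\alpha_{G-e}$ and $\alpha_{G/e}$ are admissible. This buys a self-contained, slightly more general argument (valid for any assigning with all loops valued $1$) at the cost of the bookkeeping, while the paper's citation route is shorter but leans on the matroid-level result.
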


To prove \autoref{f-Tension-Polynomial}, the following lemmas are required.

\begin{lemma}\label{Bijection}
Given a mapping $f\in A^{E(G)}$. Let 
\[
A_f[G]=\big\{g\in (A-0)^{E(G)}\mid g+f\in \Delta(G,A)\big\}.
\]
Then the mapping $\phi:\Delta_f(G,A)\to A_f[G]$ sending $g\in\Delta_f(G,A)$ to $\phi(g)=g-f$ is a bijection.
\end{lemma}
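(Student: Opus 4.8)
The plan is to establish the bijection by exhibiting its inverse directly, namely the translation in the opposite direction. I would define $\psi\colon A_f[G]\to\Delta_f(G,A)$ by $\psi(h)=h+f$, and then the whole proof reduces to two routine verifications: that $\phi$ and $\psi$ are well-defined (each lands in the asserted codomain), and that they are mutually inverse. Conceptually, the lemma just records that the affine shift $t\mapsto t-f$ carries $(A,f)$-tensions to nowhere-zero chains lying in the coset $\Delta(G,A)-f$, and that this shift is invertible.

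First I would check that $\phi$ is well-defined. Take $g\in\Delta_f(G,A)$. By definition $g$ is an $A$-tension, so $g\in\Delta(G,A)$, and moreover $g(e)\ne f(e)$ for every $e\in E(G)$. Writing $h=\phi(g)=g-f$, the inequality gives $h(e)=g(e)-f(e)\ne 0$ for all $e$, so $h\in(A-0)^{E(G)}$, while $h+f=g\in\Delta(G,A)$; hence $h\in A_f[G]$. The well-definedness of $\psi$ is symmetric: for $h\in A_f[G]$ we have $h(e)\ne 0$ for all $e$ and $h+f\in\Delta(G,A)$, so $\psi(h)=h+f$ is an $A$-tension satisfying $(h+f)(e)\ne f(e)$ for every $e$, i.e.\ $\psi(h)\in\Delta_f(G,A)$. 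Finally the composites are immediate, $\psi\big(\phi(g)\big)=(g-f)+f=g$ and $\phi\big(\psi(h)\big)=(h+f)-f=h$, so $\psi=\phi^{-1}$ and $\phi$ is a bijection.

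There is no genuine obstacle here; the statement is essentially definitional once one observes that both defining constraints of each set are transported correctly by the shift. The only point deserving a moment's attention is aligning those two constraints: the subgroup-membership condition (being a tension, i.e.\ lying in $\Delta(G,A)$) is preserved by adding or subtracting $f$, whereas the pointwise condition swaps between ``$t(e)\ne f(e)$ for all $e$'' on the tension side and ``$g(e)\ne 0$ for all $e$'' on the chain side. Keeping these paired correctly is all that is required, and I would simply make that correspondence explicit as above.
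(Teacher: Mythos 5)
Your proof is correct and follows essentially the same route as the paper's: both establish the bijection by checking that the shift $g\mapsto g-f$ and its inverse $h\mapsto h+f$ are well-defined between the two sets. If anything, your version is slightly more complete, since you explicitly verify that the two composites are the identity, whereas the paper only asserts injectivity of each map.
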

\begin{proof}
Taking any element $g\in\Delta_f(G,A)$, by $g(e)\ne f(e)$ for all $e\in E(G)$, then $\phi(g)\in (A-0)^{E(G)}$ and $\phi(g)+f=g\in \Delta_f(G,A)\subseteq \Delta(G,A)$. Namely, $\phi$ is well-defined. Clearly, $\psi$ is injective.

Conversely, define the mapping $\phi^{-1}:A_f[G]\to \Delta_f(G,A)$ sending $g\in A_f[G]$ to $\phi^{-1}(g)=g+f$. Given an element $g\in A_f[G]$. Since $g(e)\ne 0$ for every $e\in E(G)$, $\phi^{-1}(g)\in \Delta_f(G,A)$. Thus, $\phi^{-1}$ is well-defined. The mapping $\phi^{-1}$ is an injection obviously. Therefore, $\phi$ is bijective. 
\end{proof}

\begin{lemma}\label{A1=B1} Suppose  $f\in A^{E(G)}$ and the homomorphism $\psi:\Gamma(G)\to A$ such that $\psi(\eta_{D(C)})=-\sum_{e\in E(G)}f(e)\cdot\eta_{D(C)}(e)$ for any cycle $C$ of $G$. Then $A_\psi[\Gamma(G)]=A_f[G]$.
\end{lemma}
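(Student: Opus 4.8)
The plan is to collapse both defining conditions into a single orthogonality relation against all of $\Gamma(G)$ and then match them. First I would note that both $A_\psi[\Gamma(G)]$ and $A_f[G]$ are by definition subsets of $(A-0)^{E(G)}$, so the identity reduces to showing that, for a fixed $g\in(A-0)^{E(G)}$, the two membership conditions
\[
g+f\in\Delta(G,A)\qquad\text{and}\qquad \sum_{e\in E(G)}g(e)\cdot h(e)=\psi(h)\ \text{ for all }h\in\Gamma(G)
\]
are equivalent (here I rename the bound variable of the definition of $A_\psi[\cdot]$ to $h$ to keep it separate from the fixed function $f$).

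The key input is the characterization \eqref{Regular-Chain-Element}. Since $\Delta(G,A)=A\big(\Delta(G)\big)$ and $\Delta(G)=\Gamma(G)^\perp$, the duality of regular chain groups gives $\Delta(G)^\perp=\Gamma(G)$, so applying \eqref{Regular-Chain-Element} with $N=\Delta(G)$ to the chain $g+f$ shows that $g+f\in\Delta(G,A)$ if and only if $\sum_{e\in E(G)}\big(g(e)+f(e)\big)\cdot h(e)=0$ for every $h\in\Gamma(G)$. Expanding by the additivity of the scalar action $a\cdot z$ in its integer argument, this is equivalent to $\sum_{e}g(e)\cdot h(e)=-\sum_{e}f(e)\cdot h(e)$ for all $h\in\Gamma(G)$.

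It then remains to identify the right-hand side with $\psi(h)$ on all of $\Gamma(G)$. Both $\psi$ and the map $h\mapsto-\sum_{e}f(e)\cdot h(e)$ are homomorphisms from $\Gamma(G)$ to $A$, and by hypothesis they agree on every primitive chain $\eta_{D(C)}$. Because $\Gamma(G)$ is generated by $\big\{\eta_{D(C)}\mid C\in\mathcal{C}(G)\big\}$, the two homomorphisms coincide on all of $\Gamma(G)$, so $\psi(h)=-\sum_{e}f(e)\cdot h(e)$ for every $h\in\Gamma(G)$. Substituting this into the equivalence above turns the condition $g+f\in\Delta(G,A)$ into exactly $\sum_{e}g(e)\cdot h(e)=\psi(h)$ for all $h\in\Gamma(G)$, which is precisely the defining condition of $A_\psi[\Gamma(G)]$; hence $A_f[G]=A_\psi[\Gamma(G)]$.

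The mathematical content here is light, so I do not expect a genuine obstacle so much as careful bookkeeping: one must verify that $h\mapsto-\sum_{e}f(e)\cdot h(e)$ really is a $\mathbb{Z}$-module homomorphism (so that agreement on generators propagates), which follows from distributivity of the scalar multiplication, and one must be careful that \eqref{Regular-Chain-Element} is invoked for $N=\Delta(G)$ using $\Delta(G)^\perp=\Gamma(G)$ rather than for $\Gamma(G)$ itself. The most error-prone point is keeping the fixed $A$-valued function $f$ distinct from the dummy chain $h\in\Gamma(G)$, since the statement of \eqref{Regular-Chain-Element} reuses the symbol $f$ as its bound variable.
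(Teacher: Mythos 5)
Your proof is correct and follows essentially the same route as the paper's: both reduce membership in $\Delta(G,A)$ to orthogonality against $\Gamma(G)$ via \eqref{Regular-Chain-Element} and the duality $\Delta(G)=\Gamma(G)^\perp$, and then use that $\Gamma(G)$ is generated by the primitive chains $\eta_{D(C)}$ to match the condition with $\psi$. Your version is marginally tidier in making explicit the step that $\psi$ and $h\mapsto-\sum_e f(e)\cdot h(e)$ agree on all of $\Gamma(G)$ because they are homomorphisms agreeing on generators, a point the paper's two-inclusion argument leaves implicit.
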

\begin{proof}
We first show $A_\psi[\Gamma(G)]\subseteq A_f[G]$. Note that the chain group $\Delta(G,A)$ is generated by the regular chain group $\Delta(G)$ and $\Delta(G)$ is orthogonal to $\Gamma(G)$. Then, from \eqref{Regular-Chain-Element}, this proof reduces to verifying that for any $g\in A_\psi[\Gamma(G)]$, $\sum_{e\in E(G)}(g+f)(e)\cdot h(e)=0$ for all $h\in \Gamma(G)$. Since $\Gamma(G)$ is generated by $\big\{\eta_{D(C)}\mid C\in\mathcal{C}(G)\big\}$, the proof  is further equivalent to checking that for any $g\in A_\psi[\Gamma(G)]$, 
\[
\sum_{e\in E(G)}(g+f)(e)\cdot \eta_{D(C)}(e)=0,\quad\forall\, C\in\mathcal{C}(G).
\]
Since $\sum_{e\in E(G)}g(e)\cdot \eta_{D(C)}(e)=\psi(\eta_{D(C)})$ for every $g\in A_\psi[\Gamma(G)]$, it follows from $\psi(\eta_{D(C)})=-\sum_{e\in E(G)}f(e)\cdot\eta_{D(C)}(e) $ that $\sum_{e\in E(G)}(g+f)(e)\cdot \eta_{D(C)}(e)$ is indeed equal to zero.  This implies that $g+f\in \Delta(G,A)$ for any $g\in A_\psi[\Gamma(G)]$, that is, $A_\psi[\Gamma(G)]\subseteq A_f[G]$.

For the opposite inclusion, for any $g\in A_f[G]$, we have $g+f\in\Delta(G,A)$. Again, using \eqref{Regular-Chain-Element}, we have $\sum_{e\in E(G)}(g+f)(e)\cdot \eta_{D(C)}(e)=0$ for all cycles $C$ of $G$. It follows that 
\[
\sum_{e\in E(G)}g(e)\cdot \eta_{D(C)}(e)=-\sum_{e\in E(G)}f(e)\cdot \eta_{D(C)}(e)=\psi( \eta_{D(C)}).
\]
This means $g\in A_\psi[\Gamma(G)]$ for each $g\in A_f[G]$. Namely, we have $A_\psi[\Gamma(G)]\supseteq A_f[G]$. Then we arrive at $A_\psi[\Gamma(G)]=A_f[G]$.
\end{proof}
Now we proceed to prove \autoref{f-Tension-Polynomial}.
\begin{proof}[Proof of  \autoref{f-Tension-Polynomial}]
Note that any $(A,f)$-coloring $c$ of $G$ yields an $(A,f)$-tension $\delta c$ of $G$,  and vice versa. We assert that for every $(A,f)$-tension $t$, there are exactly $|A|^{c(G)}$ $(A,f)$-colorings $c$ satisfying $\delta c=t$. Suppose $t$ is an $(A,f)$-tension of $G$ satisfying $t=\delta c$ for some $c\in A^{V(G)}$. We construct a mapping $c'\in A^{V(G)}$ such that $c'(u)-c(u)=c'(v)-c(v)$ whenever vertices $u$ and $v$ belong to the same component of $G$. It follows that $c'$ is a $(A,f)$-coloring and $\delta c'=t$. On the other hand, if $\delta c^{''}=t$ for another $(A,f)$-coloring $c^{''}$, then $c^{''}(u)-c(u)=c^{''}(v)-c(v)$ whenever $u$ and $v$ are in the same component of $G$. Thus, we have shown the assertion. It follows from \autoref{Assigning-Polynomial} that 
\begin{equation}\label{A}
\tau(G,\alpha;k)=|\Delta_f(G,A)|.
\end{equation}
Applying  \autoref{Kochol-Result1}, we have 
\begin{equation}\label{B}
p\big(M(G),\alpha_{\Gamma(G),\psi};k\big)=|A_\psi[\Gamma(G)]|.
\end{equation}
By \autoref{Bijection} and \autoref{A1=B1}, we obtain
\begin{equation}\label{C}
|\Delta_f(G,A)|=|A_\psi[\Gamma(G)]|.
\end{equation}
Immediately, by integration of \eqref{A}, \eqref{B} and \eqref{C}, we arrive at 
\[
\tau(G,\alpha;k)=p\big(M(G),\alpha_{\Gamma(G),\psi};k\big)=|A_\psi[\Gamma(G)]|=|\Delta_f(G,A)|.
\]
Note that both $\tau(G,\alpha;k)$ and $p\big(M(G),\alpha_{\Gamma(G),\psi};k\big)$ are polynomials in $k$ of degree at most $|V(G)|-c(G)$, and they agree on infinitely many positive integers $k$. Hence, from the fundamental theorem of algebra, these two polynomials are the same.

In fact, we have $\alpha_{\Gamma(G),\psi}=\alpha_{D,f}=\alpha$ from \eqref{Primitive-Chain}. In \cite[Theorem 3]{Kochol2024}, it is stated that
\[
\text{a homogeneous assigning $\alpha$ of $M(G)$ is proper $\Longleftrightarrow$ $\alpha(e)=1$ for each loop $e$ of $M(G)$}.
\]
Together with \autoref{Kochol-Result1} and $r\big(M(G)\big)=|V(G)|-c(G)$, we have that $\tau(G,\alpha;k)$ has degree $|V(G)|-c(G)$ if $\alpha(e)=1$ for any loop $e$ of $G$ and equals zero otherwise. Finally, the remaining part is a direct result of \autoref{DCF} via $\tau(G,\alpha;k)=k^{-c(G)}P(G,\alpha;k)$.
\end{proof}

In \cite[Proposition 2.2]{Kochol2002}, it is demonstrated that $f\in A^{E(G)}$ is a tension of $G$ if and only if $f$ satisfies the condition $\sum_{e\in E(C)}\eta_{D(C)}(e)f(e)=0$ for every cycle $C$ of $G$. It follows that $\alpha_{D,f}\equiv 0$ for every $f\in\Delta(G,A)$. So, the assigning polynomial $\tau(G,\alpha_{D,f};k)$ is the classical tension polynomial of $G$ (see \cite{Kochol2002}) whenever $f\in\Delta(G,A)$.

Let $M$ be a matroid on the ground  set $E$ and $\prec$ be a linear ordering on $E$. For every subset $X\subseteq E$, the notation ${\rm min}(X)$ represents the minimal element of $X$ with respect to $\prec$. A subset  $X\subseteq E$ is {\em $(M,\prec)$-compatible} if $C\cap X\ne\{{\rm min}(C)\}$ for each $C\in\mathcal{C}(M)$, where $\mathcal{C}(M)$ is the set of all circuits of $M$. Furthermore, we use $\mathcal{E}(M,\prec)$ to denote the collection of all $(M^*,\prec)$-compatible subsets of $E$, where $M^*$ is the dual matroid of $M$. For any $X\subseteq E$ and assigning $\alpha$ of $M$, we define $\delta(M,\alpha;X)$ to be
\[
\delta(M,\alpha;X)=:
\begin{cases}
0,&\text{if there exists $C\in\mathcal{C}(M)$ such that $C\subseteq X$ and $\alpha(C)=1$};\\
1,&\text{otherwise}.
\end{cases}
\]  

For a regular matroid $M$, Kochol derived  an explicit expression for the $\alpha$-assigning polynomial $p(M,\alpha;k)$ over all $(M^*,\prec)$-compatible subsets of $E$ in \cite[Theorem 2]{Kochol2024}.
\begin{theorem}[\cite{Kochol2024}, Theorem 2]\label{Linear-Ordering}
Let $\alpha$ be a homogeneous assigning of a regular matroid $M$ on $E$ and $\prec$ be a linear ordering of $E$. Then,
\[
p(M,\alpha;k)=\sum_{X\in\mathcal{E}(M,\prec)}\delta(M,\alpha;X)(-1)^{|X|}(k-1)^{r(M/X)}.
\]
\end{theorem}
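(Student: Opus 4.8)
The plan is to induct on $|E|$ and to show that the right-hand side, call it $Q(M,\alpha;k)$, obeys the same base case and deletion--contraction recurrence as $p(M,\alpha;k)$ recorded in \autoref{Kochol-Result1}; since that recurrence together with the base value determines $p$ uniquely, the two polynomials must coincide. When $E=\emptyset$ the only compatible set is $X=\emptyset$, for which $\delta=1$ and $r(M/\emptyset)=r(M)=0$, so $Q=1=p$. For the inductive step I would always single out the $\prec$-\emph{maximal} element $e$ of $E$; this choice is what makes the combinatorics work, because $\max E$ is never the $\prec$-minimum of a circuit of size at least two, so $e$ only ever sits strictly above the distinguished minima.

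First I would dispatch the easy cases. If $e$ is a loop of $M$ then $e$ is a coloop of $M^*$, hence lies in no circuit of $M^*$, so the presence or absence of $e$ does not affect $(M^*,\prec)$-compatibility and each compatible $X$ avoiding $e$ pairs with $X\cup e$. When $\alpha(e)=0$ such a pair carries equal values of $\delta$ and equal factors $(k-1)^{r(M/X)}$ (contracting a loop changes neither the rank nor the relevant circuits) but opposite signs, so $Q=0$; when $\alpha(e)=1$ the factor $\delta(M,\alpha;X)$ vanishes on every set containing the circuit $\{e\}$, leaving only the sets avoiding $e$ and hence $Q(M,\alpha;k)=Q(M-e,\alpha[-e];k)$. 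If instead $e$ is a coloop of $M$ then $\{e\}$ is a circuit of $M^*$, so compatibility forces $e\notin X$; using $r(M/X)=r((M-e)/X)+1$ I would factor out one copy of $(k-1)$ to obtain $Q(M,\alpha;k)=(k-1)Q(M-e,\alpha[-e];k)$. Each of these matches the corresponding line of \autoref{Kochol-Result1}.

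The substantive case is $e$ neither a loop nor a coloop, where I would split the sum according to whether $e\in X$. For $e\notin X$ the exponent is unchanged, $r(M/X)=r((M-e)/X)$, and $\delta(M,\alpha;X)=\delta(M-e,\alpha[-e];X)$ since the circuits lying inside $X$ are untouched. For $e\in X$, writing $X=X'\cup e$, one computes $r(M/X)=r((M/e)/X')$ and $(-1)^{|X|}=-(-1)^{|X'|}$, which produces precisely the minus sign of the contraction term, while $\delta(M,\alpha;X)=\delta(M/e,\alpha[/e];X')$ follows from the definition of $\alpha[/e]$ together with the circuit correspondence between $M$ and $M/e$. Granting the two compatibility identities below, these pieces assemble into $Q(M-e,\alpha[-e];k)-Q(M/e,\alpha[/e];k)$, exactly the required recurrence.

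The heart of the argument, and the step I expect to be hardest, is proving the two compatibility identities that drive the split, namely $\{X\in\mathcal{E}(M,\prec):e\notin X\}=\mathcal{E}(M-e,\prec)$ and $\{X\setminus e:e\in X\in\mathcal{E}(M,\prec)\}=\mathcal{E}(M/e,\prec)$. Both are dual statements about how $(M^*,\prec)$-compatibility behaves under the matroid identities $(M-e)^*=M^*/e$ and $(M/e)^*=M^*\setminus e$. The contraction identity is the gentler one: for a circuit $C^*$ of $M^*$ containing $e$ with $|C^*|\ge 2$, the element $e=\max E$ belongs to $C^*\cap X$ yet is never $\min C^*$, so the constraint imposed by such a $C^*$ is automatically satisfied, and what survives are exactly the constraints from the circuits of $M^*$ avoiding $e$, i.e. the circuits of $M^*\setminus e=(M/e)^*$. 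The deletion identity is more delicate, because the circuits of $M^*/e$ are the minimal nonempty members of $\{C^*\setminus e\}$, so several circuits of $M^*$ may collapse onto one; here I would use that deleting the $\prec$-maximal $e$ preserves minima, $\min(C^*\setminus e)=\min(C^*)$, and invoke \cite[Proposition 3.1.10]{Oxley2011} (applied in $M^*$) to match the two families of constraints and their distinguished minima. Controlling this collapse while keeping the minima aligned is the main obstacle; once it is secured, all four branches of the recurrence check out and the induction closes.
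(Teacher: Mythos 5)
The paper itself offers no proof of this statement—it is quoted directly from \cite{Kochol2024}—so your attempt has to stand on its own. Your strategy (induct on $|E|$, show the right-hand side satisfies the base case and the deletion--contraction recurrence of \autoref{Kochol-Result1} applied at the $\prec$-maximal element $e$) is a sensible and standard route, and your base, loop and coloop cases are correct, as is the contraction-side compatibility identity $\{X\setminus e: e\in X\in\mathcal{E}(M,\prec)\}=\mathcal{E}(M/e,\prec)$.

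There are, however, two gaps in the main case, and only one of them is the one you flag. The unacknowledged one is the claim that $\delta(M,\alpha;X)=\delta(M/e,\alpha[/e];X\setminus e)$ ``follows from the definition of $\alpha[/e]$ together with the circuit correspondence between $M$ and $M/e$.'' It does not: the circuit correspondence says every circuit of $M/e$ is $C$ or $C\setminus e$ for a circuit $C$ of $M$, but a circuit $C$ of $M$ with $e\notin C$ need \emph{not} survive as a circuit of $M/e$ (it becomes non-minimal whenever $e\in\mathrm{cl}_M(C)\setminus C$), so a witness $C\subseteq X\setminus e$ with $\alpha(C)=1$ for $\delta(M,\alpha;X)=0$ may have no counterpart in $M/e$. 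Concretely, for $M=U_{1,3}$ (three parallel edges, $1\prec2\prec3$, $e=3$, $X=\{1,2,3\}\in\mathcal{E}(M,\prec)$), the circuit $\{1,2\}$ vanishes in $M/3$, whose circuits are $\{1\},\{2\}$ with $\alpha[/3](\{i\})=\alpha(\{i,3\})$; an assigning with $\alpha(\{1,2\})=1$ and $\alpha(\{1,3\})=\alpha(\{2,3\})=0$ violates your identity. Such an $\alpha$ is not homogeneous—which is exactly the point: the identity holds only because $\alpha=\alpha_{N,\psi}$, e.g.\ via the characterization that $\delta(M,\alpha;X)=1$ iff $\psi$ vanishes on every chain of $N$ supported in $X$, which passes to the contracted chain group. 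Since your argument never invokes homogeneity, this step as written is wrong, not merely terse. The second gap is the one you do acknowledge, the inclusion $\mathcal{E}(M-e,\prec)\subseteq\{X\in\mathcal{E}(M,\prec):e\notin X\}$: it is true, but Oxley's correspondence alone does not close it; for a circuit $C^*$ of $M^*$ with $e\in\mathrm{cl}_{M^*}(C^*)\setminus C^*$ you must still produce a circuit $D$ of $M^*/e$ with $\min C^*\in D\subseteq C^*$, which follows from the rank computation showing $\min C^*$ cannot be a coloop of $(M^*/e)|C^*$ because $C^*\setminus\min C^*$ is independent in $M^*$. With one step left open and another misdiagnosed, the proof is not complete.
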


A {\em bond} of a graph is a minimal nonempty edge cut, which is the smallest nonempty subset of edges whose removal from the graph results in an increase in the number of components. Below we return to the cycle matroid $M(G)$ on $E(G)$ represented by $M_{D(G)}$. Its dual matroid $M^*(G)$, also on $E(G)$, is referred to as a {\em bond matroid} of $G$ ($G$ may be not a planar graph). In \cite[Proposition 2.3.1]{Oxley2011},  it is demonstrated that
\[
\text{$X\subseteq E(G)$ is a circuit of $M^*(G)$ if and only if $X$ is a bond of $G$}.
\] 
Let $\prec$ be a linear ordering on $E(G)$. Define $\mathcal{E}(G,\prec)$ as 
\[
\mathcal{E}(G,\prec):=\big\{X\subseteq E(G)\mid X\cap C^*\ne \{{\rm min}(C^*)\}\text{ for all bonds $C^*$ of  $G$}\big\}.
\]
It follows that
\begin{equation}\label{A2=B2}
\mathcal{E}(G,\prec)=\mathcal{E}\big(M(G),\prec\big).
\end{equation}

For any $X\subseteq E(G)$ and assigning $\alpha$ of $G$, we define $\delta(G,\alpha;X)$  by
\[
\delta(G,\alpha;X)=:
\begin{cases}
0,&\text{if there exists $C\in\mathcal{C}(G)$ such that $E(C)\subseteq X$ and $\alpha(C)=1$};\\
1,&\text{otherwise}.
\end{cases}
\]  
From \eqref{Primitive-Chain}, every admissible assigning $\alpha=\alpha_{D,f}$ of $G$ induced by $f\in A^{E(G)}$ is precisely a homogeneous assigning $\alpha_{\Gamma(G),\psi}$ of $M(G)$, where the homomorphism $\psi:\Gamma(G)\to A$ satisfies $\psi(\eta_{D(C)})=-\sum_{e\in E(G)}f(e)\cdot\eta_{D(C)}(e) $ for any cycle $C$ of $G$. In this sense, we have
\begin{equation}\label{A3=B3}
\delta(G,\alpha;X)=\delta\big(M(G),\alpha;X\big),\quad\text{ for every }X\subseteq E(G).
\end{equation}

By integrating the preceding discussions, as a direct consequence of \autoref{f-Tension-Polynomial} and \autoref{Linear-Ordering}, we conclude an analogous result of \autoref{Linear-Ordering} concerning $\tau(G,\alpha;k)$ and $P(G,\alpha;k)$ in the following corollary. 

\begin{corollary}\label{Other-Expression}
Let $\alpha$ be an admissible assigning of  $G$ and $\prec$ a linear ordering on $E(G)$. Then
\[
\tau(G,\alpha;k)=\sum_{X\in \mathcal{E}(G,\prec)}\delta(G,\alpha;X)(-1)^{|X|}(k-1)^{r(G)-r(X)}
\]
and
\[
P(G,\alpha;k)=\sum_{X\in \mathcal{E}(G,\prec)}\delta(G,\alpha;X)(-1)^{|X|}(k-1)^{c(G|X)},
\]
where $r$ is the rank function of $G$ given by $r(X):=|V(G)|-c(G|X)$ for $X\subseteq E(G)$.
\end{corollary}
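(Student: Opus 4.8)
The plan is to transport Kochol's circuit-ordered expansion for regular matroids (\autoref{Linear-Ordering}) from the cycle matroid $M(G)$ back to the graph $G$, and then to pass from $\tau$ to $P$ through the relation $P(G,\alpha;k)=k^{c(G)}\tau(G,\alpha;k)$. First I would invoke \autoref{f-Tension-Polynomial}: it gives $\tau(G,\alpha;k)=p\big(M(G),\alpha_{\Gamma(G),\psi};k\big)$ and, via \eqref{Primitive-Chain}, identifies the admissible assigning $\alpha=\alpha_{D,f}$ with the homogeneous assigning $\alpha_{\Gamma(G),\psi}$ of $M(G)$. This identification is exactly what legitimizes applying \autoref{Linear-Ordering}, whose hypothesis requires a homogeneous assigning. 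Writing $M=M(G)$ and using the prescribed linear ordering $\prec$ of $E(G)=E(M)$, \autoref{Linear-Ordering} then reads
\[
\tau(G,\alpha;k)=\sum_{X\in\mathcal{E}(M(G),\prec)}\delta\big(M(G),\alpha;X\big)(-1)^{|X|}(k-1)^{r(M(G)/X)}.
\]

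Next I would rewrite each matroid ingredient on the right in graph-theoretic language. The index set is converted by \eqref{A2=B2}, which gives $\mathcal{E}(M(G),\prec)=\mathcal{E}(G,\prec)$, and the coefficients by \eqref{A3=B3}, which gives $\delta(M(G),\alpha;X)=\delta(G,\alpha;X)$ for every $X\subseteq E(G)$. The one point requiring care is the exponent: using the graphic rank formula $r_{M(G)}(X)=|V(G)|-c(G|X)=r(X)$ together with $r(M(G))=|V(G)|-c(G)=r(G)$, the standard contraction-rank identity gives $r(M(G)/X)=r(M(G))-r_{M(G)}(X)=r(G)-r(X)$. Substituting these three translations into the display yields precisely the asserted expansion of $\tau(G,\alpha;k)$.

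Finally, I would obtain the $P$-identity from the $\tau$-identity by invoking $P(G,\alpha;k)=k^{c(G)}\tau(G,\alpha;k)$ and distributing the factor $k^{c(G)}$ termwise across the sum over $\mathcal{E}(G,\prec)$. The step I expect to be the main obstacle is not any hard computation but the bookkeeping that matches the matroid contraction rank $r(M(G)/X)$ with the graph quantity $r(G)-r(X)=c(G|X)-c(G)$, together with the confirmation—already delivered by \autoref{f-Tension-Polynomial}—that $\alpha$ is genuinely homogeneous on $M(G)$, so that \autoref{Linear-Ordering} applies without modification. Once these identifications are secured, the corollary reduces to a direct substitution.
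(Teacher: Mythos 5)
Your proposal follows the paper's own proof step for step: identify $\tau(G,\alpha;k)$ with $p\big(M(G),\alpha_{\Gamma(G),\psi};k\big)$ via \autoref{f-Tension-Polynomial}, apply \autoref{Linear-Ordering}, translate the index set and the $\delta$-factor with \eqref{A2=B2} and \eqref{A3=B3}, and convert the exponent with $r(M(G)/X)=r(M(G))-r_{M(G)}(X)=r(G)-r(X)$; the derivation of the first display is correct and identical to the paper's. One caution on the last step, which the paper's proof also glosses over: distributing $k^{c(G)}$ termwise gives the summand $k^{c(G)}(k-1)^{c(G|X)-c(G)}$ (since $r(G)-r(X)=c(G|X)-c(G)$), not $(k-1)^{c(G|X)}$ as printed in the second display, so that formula does not follow from the stated substitution as written and should be read with this correction.
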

\begin{proof}
Suppose $\alpha$ is equal to the admissible assigning $\alpha_{D,f}$ for some $f\in A^{E(G)}$. Together with the relations in \eqref{A2=B2} and \eqref{A3=B3}, by using \autoref{f-Tension-Polynomial} and \autoref{Linear-Ordering}, we have
\[
\tau(G,\alpha;k)=\sum_{X\in \mathcal{E}(G,\prec)}\delta(G,\alpha;X)(-1)^{|X|}(k-1)^{r(M(G)/X)}.
\]
Note that  $r\big(M(G)/X\big)=r\big(M(G)\big)-r_{M(G)}(X)$ from \cite[Proposition 3.1.6]{Oxley2011}, where $r_{M(G)}$ is the rank function of $M(G)$. It follows from
$r_{M(G)}(X)=r(X)$ for any $X\subseteq E(G)$ that 
\[
\tau(G,\alpha;k)=\sum_{X\in \mathcal{E}(G,\prec)}\delta(G,\alpha;X)(-1)^{|X|}(k-1)^{r(G)-r(X)}.
\]
Combining the relations $r(X)=|V(G)|-c(G|X)$ and $P(G,\alpha;k)=k^{c(G)}\tau(G,\alpha;k)$, we derive the second formula in \autoref{Other-Expression}. 
\end{proof}
\section{Generalization of Whitney's Broken Cycle Theorem}\label{Sec3}
A natural question is whether there is a combinatorial interpretation for the coefficients of cycle-assigning polynomials. To address this, we introduce the concept of a broken $\alpha$-compatible cycle as an extension of the usual broken cycle proposed by Whitney \cite{Whitney1932}. 
\begin{definition}\label{Broken-Circuit}
{\rm Let $\alpha$ be an assigning of $G$ and $\prec$ be a linear ordering on $E(G)$. A subgraph $H$ of $G$ is said to be:
\begin{itemize}
\item an {\em $\alpha$-compatible cycle} of $G$ if $H$ is a cycle and $\alpha(H)=0$;
\item a {\em  broken $\alpha$-compatible cycle} of $G$ if $H$ is obtained from an $\alpha$-compatible cycle by removing the maximal element with respect to the linear ordering $\prec$.
\end{itemize}
}
\end{definition}
The following result provides a counting interpretation for the coefficients of the cycle-assigning polynomial $P(G,\alpha;k)$ via broken $\alpha$-compatible cycle, extending the Whitney's celebrated Broken Cycle Theorem  \cite{Whitney1932}. It is worth noting that the coefficients $w_i(G,\alpha)$ in \autoref{Generalized-Broken-Circuit-Theorem} are independent of the choice of edge order. 
\begin{theorem}[Generalized Whitney's Broken Cycle Theorem]\label{Generalized-Broken-Circuit-Theorem}
Let $\alpha$ be an admissible assigning of $G$ and $\prec$ a linear ordering on $E(G)$. Write $P(G,\alpha;k)=\sum_{i=0}^{r(G)}(-1)^{i}w_i(G,\alpha)k^{|V(G)|-i}$. Then each unsigned coefficient $w_i(G,\alpha)$ is the number of $\alpha$-compatible spanning subgraphs of $G$ that have exactly $i$ edges and do not contain  broken $\alpha$-compatible cycles with respect to $\prec$.
\end{theorem}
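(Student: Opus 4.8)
The plan is to prove the identity by a sign-reversing involution that cancels the contribution of every $\alpha$-compatible spanning subgraph containing a broken $\alpha$-compatible cycle, leaving exactly the subgraphs enumerated by $w_i(G,\alpha)$. First I would split the defining sum for $P(G,\alpha;k)$ according to the partition of the $\alpha$-compatible spanning subgraphs into the family $\mathcal N$ of those containing \emph{no} broken $\alpha$-compatible cycle and the family $\mathcal B$ of those containing at least one. If some $H\in\mathcal N$ contained a cycle $C$, then $\alpha(C)=0$ because $H$ is $\alpha$-compatible, so $C$ would be an $\alpha$-compatible cycle and $C-e_C\subseteq H$ a broken $\alpha$-compatible cycle, a contradiction; hence every $H\in\mathcal N$ is a spanning forest. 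For a spanning forest with exactly $i$ edges one has $c(H)=|V(G)|-i$ and $(-1)^{|E(H)|}=(-1)^i$, so the members of $\mathcal N$ contribute
\[
\sum_{H\in\mathcal N}(-1)^{|E(H)|}k^{c(H)}=\sum_{i=0}^{r(G)}(-1)^{i}\,\big|\{H\in\mathcal N:|E(H)|=i\}\big|\,k^{|V(G)|-i},
\]
and the bracketed count is precisely $w_i(G,\alpha)$. It therefore suffices to show $\sum_{H\in\mathcal B}(-1)^{|E(H)|}k^{c(H)}=0$.

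To this end I would construct an involution $\phi$ on $\mathcal B$. For $H\in\mathcal B$ set
\[
e^{*}(H):={\rm min}_{\prec}\big\{\,e_C : C\ \text{is an}\ \alpha\text{-compatible cycle with}\ C-e_C\subseteq H\,\big\},
\]
which is well defined since every member of $\mathcal B$ contains a broken $\alpha$-compatible cycle, and fix a witnessing cycle $C^{*}$ with $e_{C^{*}}=e^{*}(H)$. Define $\phi(H):=H+e^{*}(H)$ if $e^{*}(H)\notin E(H)$, and $\phi(H):=H-e^{*}(H)$ if $e^{*}(H)\in E(H)$. Since the ends of $e^{*}(H)$ are already joined in $H$ by the sub-path $C^{*}-e^{*}(H)$, the edge $e^{*}(H)$ lies on a cycle of $H+e^{*}(H)$ (and of $H$ when it is present), so toggling it changes neither the number of components nor the rank; thus $\phi$ preserves $k^{c(H)}$ while flipping $(-1)^{|E(H)|}$.

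The crux — and the one place where admissibility of $\alpha$ is indispensable — is to show that $\phi(H)$ again lies in $\mathcal B$. When $e^{*}(H)$ is deleted this is immediate, because a subgraph of an $\alpha$-compatible graph is $\alpha$-compatible and $C^{*}-e^{*}(H)$ survives the deletion. When $e^{*}(H)$ is added, the enlarged subgraph may acquire several new cycles, each forming a theta-subgraph together with $C^{*}$, and we must guarantee that all of them are $\alpha$-compatible; this is exactly the content of \autoref{No-Broken-Circuit}, whose theta-graph computation shows $H+e^{*}(H)$ stays $\alpha$-compatible. In either case $C^{*}-e^{*}(H)\subseteq\phi(H)$, so $\phi(H)\in\mathcal B$. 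I expect this step to be the main obstacle, and I would lean entirely on \autoref{No-Broken-Circuit} to clear it.

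Finally I would verify that $\phi$ is an involution by proving $e^{*}(\phi(H))=e^{*}(H)$. The cycle $C^{*}$ still exhibits $e^{*}(H)$ as a candidate for $\phi(H)$, so $e^{*}(\phi(H))\le e^{*}(H)$; conversely any strictly smaller candidate $e'=e_{C'}$ with $C'-e'\subseteq\phi(H)$ would in fact satisfy $C'-e'\subseteq H$ — directly in the deletion case, and in the addition case because $e^{*}(H)\in E(C'-e')$ is impossible (it would force $e^{*}(H)\le{\rm max}_{\prec}E(C')=e'<e^{*}(H)$) — contradicting the minimality defining $e^{*}(H)$. Hence $e^{*}(\phi(H))=e^{*}(H)$, the two cases of $\phi$ interchange, and $\phi\circ\phi=\mathrm{id}$. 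Pairing each $H\in\mathcal B$ with $\phi(H)$ then yields $\sum_{H\in\mathcal B}(-1)^{|E(H)|}k^{c(H)}=0$, which completes the proof. I would close by remarking that the degenerate case of a loop $e$ with $\alpha(e)=0$ is consistent: there the broken cycle is empty, $\mathcal N=\emptyset$ forces every $w_i=0$, and $P(G,\alpha;k)\equiv0$ by \autoref{DCF}; moreover, since the coefficients of $P(G,\alpha;k)$ are intrinsic, the independence of $w_i(G,\alpha)$ from the choice of $\prec$ follows at once.
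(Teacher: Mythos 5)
Your proof is correct and follows essentially the same route as the paper's: both arguments isolate the forests (which give the stated coefficients), invoke \autoref{No-Broken-Circuit} as the crucial step to show that adding the completing edge of a broken $\alpha$-compatible cycle preserves $\alpha$-compatibility, and cancel the remaining terms by toggling that edge. The only difference is bookkeeping — you package the cancellation as a single sign-reversing involution keyed to the $\prec$-minimal completing edge, whereas the paper partitions the offending subgraphs into classes indexed by an ordering of the broken cycles and cancels within each class.
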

\begin{proof}
Let $E(G)=\{e_m\prec\cdots\prec e_2\prec e_1\}$. We prove the result by induction on $m$. If $m=0$,  there is nothing to prove. Suppose $m\ge 1$ and the result holds for smaller values of $m$. Let $e_m=uv$. If $e_m$ is a loop and $\alpha(e_m)=0$, it is trivial via \autoref{DCF}. If $e_m$ is a loop and $\alpha(e_m)=1$, from 
\[
P(G,\alpha;k)=P(G-e, \alpha_{G-e};k)
\]
in \autoref{DCF}, the result holds in this case by the inductive hypothesis. 

If $e_m$ is a link, by the inductive hypothesis, the result holds for both graphs $G-e_m$ and $G/e_m$. With a slight abuse of notation, we identify $E(G/e_m)=\{e_{m-1}\prec\cdots\prec e_2\prec e_1\}$. More specifically, by identifying the ends of $e_m$, this create a single new vertex of $G/e_m$, denoted by $v_{e_m}$. Suppose the edge $e_i=xy$ in $G$ for some $i<m$. If neither $x$ nor $y$ is one of the vertices $u$ or $v$, then $e_i$  automatically remains an edge in $G/e_m$.  If either $x$ or $y$ is one of the vertices $u$ or $v$, we identity $e_i$ as $uv_{e_m}$, $v_{e_m}v$ or $v_{e_m}v_{e_m}$ depending on the incidence of $e_i$ in $G$. By the Deletion-contraction Formula in \autoref{DCF}, we have $w_0(G,\alpha)=w_0(G-e_m,\alpha_{G-e_m})$ and
\begin{equation}\label{Eq4}
w_i(G,\alpha)=w_i(G-e_m,\alpha_{G-e_m})+w_{i-1}(G/e_m,\alpha_{G/e_m}),\; i=1,2,\ldots,r(G).
\end{equation}
Therefore, to complete the induction step, it is sufficient to check that the relation in \eqref{Eq4} is true. But this is a consequence of the following two simple facts: (a) $H$ is an $\alpha$-compatible spanning subgraph $H$ of $G$ that does not contain $e$ and broken $\alpha$-compatible cycles  if and only if $H$ is an $\alpha_{G-e_m}$-compatible spanning subgraph of $G-e_m$ that does not contain   broken $\alpha_{G-e_m}$-compatible  cycles; (b) $H$ is an $\alpha$-compatible spanning subgraph $H$ of $G$ that contains $e$ and no  broken $\alpha$-compatible cycles  if and only if $H/e_m$ is an $\alpha_{G/e_m}$-compatible spanning subgraph of $G/e_m$ that does not contain   broken $\alpha_{G/e_m}$-compatible cycles. 
\end{proof}

In the remainder of this section, we present a direct proof of  \autoref{Generalized-Broken-Circuit-Theorem}. For this purpose, the next lemma is required.
\begin{lemma}\label{No-Broken-Circuit}
Let $\alpha$ be a $D$-admissible assigning of $G$ and $\prec$ a linear ordering on $E(G)$. Suppose $H$ is a subgraph of $G$ containing a broken $\alpha$-compatible cycle $B$ and $B=C-e_C$ for some $\alpha$-compatible cycle $C$ of $G$, where $e_C$ is the maximal member in $E(C)$ with respect to $\prec$. If $H$ is an $\alpha$-compatible subgraph of $G$ and $e_C\notin E(H)$, then $H+e_C$ is an $\alpha$-compatible subgraph.
\end{lemma}
\begin{proof}
Note that $C$ is a cycle in $H+e_C$, but not in $H$. If $H+e_C$ contains exactly one more cycle $C$ than $H$, there is nothing left to prove. Otherwise, $H+e_C$ contains at least one additional cycle compared to $H$, excluding the cycle $C$. Every such additional cycle, together with the cycle $C$, forms a theta-subgraph in $H+e_C$. Therefore, this case reduces to verifying that each such the theta-subgraph in $H+e_C$ is an $\alpha$-compatible subgraph of $G$. Without loss of generality, we only need to examine the following oriented theta-subgraph in $H+e_C$ with the orientation $D$, as shown in \autoref{Fg1}.
\begin{figure}[H]
\centering
\begin{tikzpicture}[scale=0.7,line width=1pt]
\def\radius{3}
 \foreach \i in {1,...,8} {
        \pgfmathsetmacro{\angleA}{360/8 * (\i - 1)}
        \pgfmathsetmacro{\angleB}{360/8 * \i}
        \coordinate (P\i) at ({\radius * cos(\angleA)}, {\radius * sin(\angleA)});
        
        \fill (P\i) circle(2pt);
        \node at ({1.2*\radius * cos(360/8 * 0}, {1.2*\radius * sin(360/8 * 0)}) {$u_i$}; 
        \node at ({1.2*\radius * cos(360/8 * 1}, {1.2*\radius * sin(360/8 * 1)}) {$u_{i+1}$}; 
        \node at ({1.2*\radius * cos(360/8 * 3}, {1.2*\radius * sin(360/8 * 3)}) {$u_j$}; 
        \node at ({1.2*\radius * cos(360/8 * 4}, {1.2*\radius * sin(360/8 * 4)}) {$v_k$};  
        \node at ({1.2*\radius * cos(360/8 * 5}, {1.2*\radius * sin(360/8 * 5)}) {$v_{k+1}$};  
        \node at ({1.2*\radius * cos(360/8 * 7}, {1.2*\radius * sin(360/8 * 7)}) {$v_l$};  
    }
   \draw[-stealth,blue] (P1) arc[start angle=0,end angle=45,radius=\radius];     
    \draw[-stealth,dashed,blue] (P2) arc[start angle=45,end angle=90,radius=\radius];  
    \draw[-stealth,dashed,blue] (P3) arc[start angle=90,end angle=135,radius=\radius]; 
    \draw[-stealth,blue] (P4) arc[start angle=135,end angle=180,radius=\radius];  
    \draw[-stealth,green] (P5) arc[start angle=180,end angle=225,radius=\radius];  
    \draw[-stealth,dashed,green] (P6) arc[start angle=225,end angle=270,radius=\radius]; 
    \draw[-stealth,dashed,green] (P7) arc[start angle=270,end angle=315,radius=\radius]; 
    \draw[-stealth,green] (P8) arc[start angle=315,end angle=360,radius=\radius];  
\foreach \i in {1,...,7} {
        \pgfmathsetmacro{\fraction}{\i/7} 
        \coordinate (S\i) at ({(1-\fraction)*\radius*cos(0) + \fraction*\radius*cos(180)}, 
                              {(1-\fraction)*\radius*sin(0) + \fraction*\radius*sin(180)});
        \fill (S\i) circle(2pt);  
        \node at ({(1-1/7)*\radius*cos(0) + 1/7*\radius*cos(180)}, 
                  {(1-1/7)*\radius*sin(0) + 1/7*\radius*sin(180) - 0.3}) {$u_{i-1}$};  
        \node at ({(1-2/7)*\radius*cos(0) + 2/7*\radius*cos(180)}, 
                  {(1-2/5)*\radius*sin(0) + 2/5*\radius*sin(180) - 0.3}) {$u_1$}; 
        \node[red] at ({(1-3/7)*\radius*cos(0) + 3/7*\radius*cos(180)}, 
                  {(1-3/7)*\radius*sin(0) + 3/7*\radius*sin(180) - 0.3}) {$u_0$};  
        \node[red] at ({(1-4/7)*\radius*cos(0) + 4/7*\radius*cos(180)}, 
                  {(1-4/7)*\radius*sin(0) + 4/7*\radius*sin(180) - 0.3}) {$v_0$}; 
        \node at ({(1-5/7)*\radius*cos(0) + 5/7*\radius*cos(180)}, 
                  {(1-5/7)*\radius*sin(0) + 5/7*\radius*sin(180) - 0.3}) {$v_1$}; 
        \node at ({(1-6/7)*\radius*cos(0) + 6/7*\radius*cos(180)}, 
                  {(1-6/7)*\radius*sin(0) + 6/7*\radius*sin(180) - 0.3}) {$v_{k-1}$}; 
  }
\draw[-stealth,red] (P1) -- (S1);  
\draw[-stealth,dashed,red] (S1) -- (S2);
\draw[-stealth,red] (S2) -- (S3);
\draw[-stealth,red] (S3) -- (S4);
\draw[-stealth,red] (S4) -- (S5); 
\draw[-stealth,dashed,red] (S5) -- (S6); 
\draw[-stealth,red] (S6) -- (S7); 
\node[blue] at (0, \radius -0.5) {$P_3$}; 
\node[red] at (0, \radius -2.5) {$P_1$};    
\node[green] at (0, \radius -5.5) {$P_2$}; 
\end{tikzpicture}
\caption{The directed paths $P_1,P_2$ and $P_3$ toghter form an oriented theta-graph}\label{Fg1}
\end{figure}
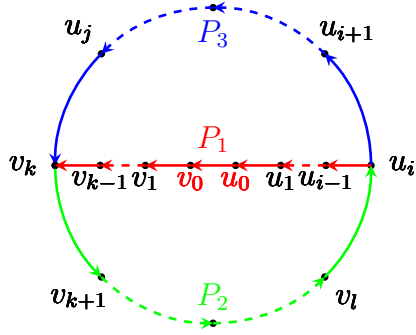

More specifically, in \autoref{Fg1},  the edge $e_C=u_0v_0$; the cycle $C$, which is the union of the red directed path $P_1=(u_i,u_{i-1},\ldots,v_{k-1},v_k)$ and the green directed path $P_2=(v_k,v_{k+1},\ldots,v_l,u_i)$, and the cycle $C_1$, which is the union of  the red directed path $P_1$ and the blue directed path $P_3=(u_i,u_{i+1},\ldots,u_j,v_k)$, are in $H+e_C$ but not in $H$; the cycle $C_2$ is the common cycle of $H+e_C$ and $H$, formed by the union of both directed paths $P_2$ and $P_3$. Then, to complete our proof, it suffices to show $\alpha(C)=\alpha(C_1)=\alpha(C_2)=0$.

Since $\alpha$ is $D$-admissible, we may assume $\alpha=\alpha_{D,f}$ for some Abelian group $A$ and function $f:E(G)\to A$. Combining that $C$ and $H$ are $\alpha$-compatible subgraphs of $G$, this means 
\[
\alpha_{D,f}(C)=\alpha(C)=0\quad\And\quad\alpha_{D,f}(C_2)=\alpha(C_2)=0.
\]
This is further equivalent to satisfying the following equations
\[
\sum_{e\in E(C)}\eta_{D(C)}(e)f(e)=\sum_{e\in E(P_1)}\eta_{D(C)}(e)f(e)+\sum_{e\in E(P_2)}\eta_{D(C)}(e)f(e)=0
\]
and
\[
\sum_{e\in E(C_2)}\eta_{D(C_2)}(e)f(e)=\sum_{e\in E(P_2)}\eta_{D(C_2)}(e)f(e)+\sum_{e\in E(P_3)}\eta_{D(C_2)}(e)f(e)=0.
\]
Taking the clockwise as the reference direction for every cycle, then $\eta_{D(C_1)}(e)=-\eta_{D(C)}(e)$ for any edge $e\in E(P_1)$ and $\eta_{D(C_1)}(e)=\eta_{D(C_2)}(e)$ for any edge $e\in E(P_3)$. Thus, we deduce
\[
\sum_{e\in E(C_1)}\eta_{D(C_1)}(e)f(e)=\sum_{e\in E(P_1)}\eta_{D(C_1)}(e)f(e)+\sum_{e\in E(P_3)}\eta_{D(C_1)}(e)f(e)=0.
\]
This implies $\alpha(C_1)=\alpha_{D,f}(C_1)=0$. Therefore, $H+e_C$ is an $\alpha$-compatible subgraph of $G$.
\end{proof}

We now return to the proof of the Generalized Whitney's Broken Cycle Theorem.
\begin{proof}[Proof of \autoref{Generalized-Broken-Circuit-Theorem}]
Suppose $G$ has $q\ge 0$  broken $\alpha$-compatible cycles. For every  broken $\alpha$-compatible cycle $B$, let $e_B$ be the maximal member in $B$ with respect to the linear ordering $\prec$. We can arrange the  broken $\alpha$-compatible cycles $B_1,\ldots, B_q$ such that $e_{B_1}\preceq e_{B_2}\cdots\preceq  e_{B_q}$. Let $\mathcal{S}_1$ be the collection of $\alpha$-compatible spanning subgraphs of $G$ containing the  broken $\alpha$-compatible cycle $B_1$. For $i=2,\ldots,q$, let $\mathcal{S}_i$ be the collection of $\alpha$-compatible spanning subgraphs of $G$  containing the  broken $\alpha$-compatible cycle $B_i$ but not any of $B_1,\ldots, B_{i-1}$. In addition, let $\mathcal{S}_{q+1}$ be the collection of $\alpha$-compatible spanning subgraphs of $G$ containing no  broken $\alpha$-compatible cycles. Then $\{\mathcal{S}_1,\mathcal{S}_2,\ldots,\mathcal{S}_{q+1}\}$ decomposes the set of $\alpha$-compatible spanning subgraphs of $G$. Combining \autoref{Assigning-Polynomial}, the cycle-assigning polynomial $P(G,\alpha;k)$ can be written in the form
\begin{equation}\label{Eq2}
P(G,\alpha;k)=\sum_{i=1}^{q+1}\sum_{H\in\mathcal{S}_i}(-1)^{|E(H)|}k^{c(H)}.
\end{equation}

We first consider $\mathcal{S}_1$. Assume $B_1=C_1-e_{C_1}$ for some $\alpha$-compatible cycle $C_1$ of $G$, where $e_{C_1}$ is the maximal member in $C_1$ with respect to $\prec$. Clearly each member $H$ in $\mathcal{S}_1$ containing $e_{C_1}$  gives rise to a unique member $H-e_{C_1}$ in $\mathcal{S}_1$ that does not contain $e_{C_1}$. Conversely, according to \autoref{No-Broken-Circuit}, each member $H$ in $\mathcal{S}_1$ containing no $e_{C_1}$ gives rise to a unique member $H+e_{C_1}$ in $\mathcal{S}_1$ containing $e_{C_1}$. In addition, since $B_1+e_{C_1}=C_1$ is a cycle of $G$, for each $H\in\mathcal{S}_1$ containing $e_{C_1}$, we have $c(H)=c(H- e_{C_1})$. Thus, we derive
\[
\sum_{H\in\mathcal{S}_1,\, e_{C_1}\in E(H)}\big((-1)^{|E(H)|}k^{c(H)}+(-1)^{|E(H)|-1}k^{c(H-e_{C_1})}\big)=0.
\]
Namely, we arrive at $\sum_{H\in\mathcal{S}_1}(-1)^{|E(H)|}k^{c(H)}=0$ in \eqref{Eq2}.

Consider $\mathcal{S}_2$. Assume $B_2=C_2-e_{C_2}$ for some $\alpha$-compatible cycle $C_2$ of $G$, where $e_{C_2}$ is the maximal member in $C_2$ with respect to $\prec$. It follows from the ordering $e_{B_1}\preceq e_{B_2}$ that  $e_{B_1}\preceq e_{B_2}\prec e_{C_2}$. So, $B_1$ and $B_2$ do not contain $e_{C_2}$. Combining this, using the same argument as the case $\mathcal{S}_1$, we conclude that every element $H$ in $\mathcal{S}_2$ containing no $e_{C_2}$ gives rise to a unique member $H+e_{C_2}$ in $\mathcal{S}_2$ containing $e_{C_2}$, and vice versa. Hence, we also obtain $\sum_{H\in\mathcal{S}_2}(-1)^{|E(H)|}k^{c(H)}=0$ in \eqref{Eq2}.

Likewise, we can deduce the total contribution of the members in $\mathcal{S}_i$ to $P(G,\alpha;k)$ is zero for each $i=1,2,\ldots,q$. Therefore, $P(G,\alpha;k)$ in \eqref{Eq2} can be written as
\begin{equation}\label{Eq3}
P(G,\alpha;k)=\sum_{H\in\mathcal{S}_{q+1}}(-1)^{|E(H)|}k^{c(H)}.
\end{equation}
Since every element $H\in\mathcal{S}_{q+1}$ is an $\alpha$-compatible spanning subgraph of $G$ and contains no broken $\alpha$-compatible cycles, we can deduce that  $H$ contains no cycles. Therefore, $H$  is a forest. This implies that for any member $H\in\mathcal{S}_{q+1}$,
\[
|E(H)|\le r(G)\quad\And\quad c(H)=|V(G)|-|E(H)|.
\]
Thus, $P(G,\alpha;k)$ in \eqref{Eq3} can be further simplified to the following form
\[
P(G,\alpha;k)=\sum_{i=0}^{r(G)}(-1)^iw_i(G,\alpha)k^{|V(G)|-i},
\]
which completes the proof.
\end{proof}

As an immediate result of \autoref{Generalized-Broken-Circuit-Theorem}, we have the next corollary.
\begin{corollary}\label{Value-1}
Let $\alpha$ be an admissible assigning of $G$. Then $(-1)^{|V(G)|}P(G,\alpha;-1)$ is the number of $\alpha$-compatible spanning subgraphs of $G$ that do not contain  broken $\alpha$-compatible cycles with respect to a linear ordering $\prec$ on $E(G)$.
\end{corollary}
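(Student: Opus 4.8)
The plan is to evaluate the closed form for $P(G,\alpha;k)$ supplied by \autoref{Generalized-Broken-Circuit-Theorem} at the single point $k=-1$ and read off the resulting sign. First I would recall the expansion $P(G,\alpha;k)=\sum_{i=0}^{r(G)}(-1)^{i}w_i(G,\alpha)k^{|V(G)|-i}$, where $w_i(G,\alpha)$ counts precisely the $\alpha$-compatible spanning subgraphs with exactly $i$ edges that contain no broken $\alpha$-compatible cycle with respect to $\prec$. This expansion is the entire content needed; the corollary is a one-substitution consequence.

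Next I would substitute $k=-1$. Each monomial $k^{|V(G)|-i}$ becomes $(-1)^{|V(G)|-i}$, so the $i$-th term acquires the overall sign $(-1)^{i}(-1)^{|V(G)|-i}=(-1)^{|V(G)|}$, which is the same for every $i$. Factoring out this common sign gives
\[
P(G,\alpha;-1)=(-1)^{|V(G)|}\sum_{i=0}^{r(G)}w_i(G,\alpha),
\]
and multiplying both sides by $(-1)^{|V(G)|}$ yields $(-1)^{|V(G)|}P(G,\alpha;-1)=\sum_{i=0}^{r(G)}w_i(G,\alpha)$.

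Finally I would interpret the right-hand side combinatorially. The quantities $w_i(G,\alpha)$ partition, according to edge count $i$, the collection of $\alpha$-compatible spanning subgraphs that avoid broken $\alpha$-compatible cycles; by the proof of \autoref{Generalized-Broken-Circuit-Theorem} such subgraphs are forests, so the admissible edge counts are exactly $0\le i\le r(G)$ and no terms are missed. Hence $\sum_{i}w_i(G,\alpha)$ is the total number of such subgraphs, which is the asserted value.

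The only point requiring any care—hardly an obstacle—is the bookkeeping of the two sign contributions and the confirmation that the summation range $0\le i\le r(G)$ already captures every relevant subgraph; both facts are established within the proof of \autoref{Generalized-Broken-Circuit-Theorem}, so no new argument is needed and the proof is essentially immediate.
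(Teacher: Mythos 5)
Your proposal is correct and matches the paper's intent exactly: the paper states the corollary as a direct consequence of \autoref{Generalized-Broken-Circuit-Theorem} without further argument, and your substitution $k=-1$ with the common sign $(-1)^{i}(-1)^{|V(G)|-i}=(-1)^{|V(G)|}$ is precisely the intended one-line computation. No gaps.
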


Notably, the cycle-assigning polynomial $P(G,\alpha;k)$ agrees with the chromatic polynomial $P(G;k)$ of $G$ whenever $G$ is $\alpha$-compatible. When $G$ is $\alpha$-compatible, the $\alpha$-compatible spanning subgraph, $\alpha$-compatible cycle and broken $\alpha$-compatible cycle of $G$ correspond to the ordinary spanning subgraph, cycle and broken cycle of $G$, respectively. Hence, \autoref{Generalized-Broken-Circuit-Theorem} deduces the following Whitney's Broken Cycle Theorem in \cite{Whitney1932} directly.
\begin{theorem}[\cite{Whitney1932}]\label{Broken-Cycle}
Write the chromatic polynomial as $P(G;k)=\sum_{i=0}^{r(G)}(-1)^iw_i(G)k^{|V(G)|-i}$. Then each unsigned coefficient $w_i(G)$ is the number of spanning subgraphs of $G$ that have exactly $i$ edges and do not contain broken cycles with respect to a linear ordering $\prec$ on $E(G)$.
\end{theorem}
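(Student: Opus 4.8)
The plan is to obtain \autoref{Broken-Cycle} as the special case of \autoref{Generalized-Broken-Circuit-Theorem} in which $\alpha$ is the zero assigning $\alpha\equiv 0$ of $G$. The whole argument is a specialization: once I check that the zero assigning is admissible, that $P(G,0;k)$ is literally the chromatic polynomial, and that each of the three notions (\emph{$\alpha$-compatible spanning subgraph}, \emph{$\alpha$-compatible cycle}, \emph{broken $\alpha$-compatible cycle}) collapses to the corresponding classical notion, the coefficient identity $w_i(G,0)=w_i(G)$ and its combinatorial meaning follow immediately.

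First I would verify that $\alpha\equiv 0$ is admissible. Taking $f\equiv 0:E(G)\to A$ for any Abelian group $A$, every cycle $C$ satisfies $\sum_{e\in E(C)}\eta_{D(C)}(e)\cdot 0=0$, so $\alpha_{D,f}(C)=0$; hence $\alpha\equiv 0=\alpha_{D,0}$ is admissible for any fixed reference orientation $D$. Next I would establish the polynomial identity $P(G,0;k)=P(G;k)$. By \autoref{Assigning-Polynomial} applied with $f\equiv 0$, the value $P(G,0;k)$ equals the number of $(A,0)$-colorings for any $A$ of order $k$; but an $(A,0)$-coloring is exactly a proper coloring, so for $A=\mathbb{Z}_k$ this count is the number of proper $k$-colorings, i.e.\ $P(G;k)$. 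Since $P(G,0;k)$ and the chromatic polynomial agree at every positive integer $k$, they coincide as polynomials. Finally, because $\alpha(C)=0$ for every cycle $C$ when $\alpha\equiv 0$, the graph $G$ is $\alpha$-compatible, so every spanning subgraph is $\alpha$-compatible, every cycle is an $\alpha$-compatible cycle, and every broken $\alpha$-compatible cycle is precisely a broken cycle in Whitney's sense. Substituting these identifications into \autoref{Generalized-Broken-Circuit-Theorem} and reading off the coefficient of $k^{|V(G)|-i}$ gives $w_i(G,0)=w_i(G)$ together with the stated count.

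I do not expect a genuine obstacle here, since the statement is a direct corollary; the only points requiring care are bookkeeping rather than substance. Specifically, I must make sure the three specialized notions match Whitney's original definitions verbatim (in particular that ``no broken $\alpha$-compatible cycle'' becomes ``no broken cycle'' with the \emph{same} linear ordering $\prec$), and that the passage from equality of counting values to equality of polynomials is justified by agreement on infinitely many integers $k$ via the fundamental theorem of algebra. As an alternative route, one could reprove the statement directly by running the deletion--contraction/sign-cancellation argument used for \autoref{Generalized-Broken-Circuit-Theorem} with $\alpha\equiv 0$; but the specialization above is shorter and avoids repeating that computation.
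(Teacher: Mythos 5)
Your proposal is correct and matches the paper's own argument: the paper likewise obtains \autoref{Broken-Cycle} by specializing \autoref{Generalized-Broken-Circuit-Theorem} to the case where $G$ is $\alpha$-compatible (in particular $\alpha\equiv 0$), noting that $P(G,\alpha;k)$ then equals the chromatic polynomial and that the compatible/broken-compatible notions collapse to the classical ones. Your additional checks (admissibility of $\alpha\equiv 0$ via $f\equiv 0$, and the polynomial identity via agreement at infinitely many integers) are just the bookkeeping the paper leaves implicit.
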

\section{Coefficient comparisons and further explorations}\label{Sec4}
In this section, we primarily apply the Generalized Broken Cycle Theorem to explore how the coefficients of cycle-assigning polynomials vary with changes in admissible assignings and to handle certain group coloring problems in graphs. We first establish a unified order-preserving relation from admissible assignings to cycle-assigning polynomials when both are naturally ordered.  Let $\alpha$ be an assigning of $G$ and $\prec$ be a linear ordering on $E(G)$. For convenience, we denote by $\mathcal{C}(G,\alpha)$, $\mathcal{C}'(G,\alpha)$ and  $\mathcal{C}''(G,\alpha)$ the sets of $\alpha$-compatible cycles, broken $\alpha$-compatible cycles, and $\alpha$-compatible spanning subgraphs of $G$ that do not contain  broken $\alpha$-compatible cycles with respect to $\prec$, respectively. 
\begin{theorem}[Comparison of Coefficients]\label{Comparison}
Let $\alpha,\alpha'$ be two admissible assignings of $G$. If $\alpha(C)\le \alpha'(C)$ for all cycles $C\in \mathcal{C}(G)$, then the unsigned coefficients of the cycle-assigning polynomials $P(G,\alpha;k)$ and $P(G,\alpha';k)$ satisfy 
\[
w_i(G,\alpha)\le w_i(G,\alpha') \quad \mbox{ for }\quad i=0,1,\ldots,r(G).
\]
In particular, $w_i(G)=w_i(G,0)\le w_i(G,\alpha)$ for any admissible assigning $\alpha$ of $G$.
\end{theorem}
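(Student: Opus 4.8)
The plan is to read off the inequality directly from the combinatorial interpretation of the coefficients supplied by \autoref{Generalized-Broken-Circuit-Theorem}, rather than by manipulating the polynomials. Fix once and for all a single linear ordering $\prec$ on $E(G)$; since $w_i(G,\alpha)$ and $w_i(G,\alpha')$ do not depend on the chosen ordering, this costs nothing. By \autoref{Generalized-Broken-Circuit-Theorem}, $w_i(G,\alpha)$ is exactly the number of members of $\mathcal{C}''(G,\alpha)$ having $i$ edges, and likewise $w_i(G,\alpha')$ counts the $i$-edge members of $\mathcal{C}''(G,\alpha')$. Hence it suffices to establish the inclusion $\mathcal{C}''(G,\alpha)\subseteq\mathcal{C}''(G,\alpha')$, which is equivalent, edge-count by edge-count, to the asserted inequality. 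The whole argument thus reduces to comparing the broken-cycle structures attached to $\alpha$ and to $\alpha'$.

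First I would translate the hypothesis $\alpha(C)\le\alpha'(C)$ into a statement about compatible cycles. Since both assignings take values in $\{0,1\}$, the inequality forces $\alpha'(C)=0\Rightarrow\alpha(C)=0$; equivalently, every $\alpha'$-compatible cycle is an $\alpha$-compatible cycle, so $\mathcal{C}(G,\alpha')\subseteq\mathcal{C}(G,\alpha)$. Because the edge $e_C$ deleted to form a broken compatible cycle depends only on $C$ and $\prec$ and not on the assigning, this containment passes verbatim to the broken versions, giving $\mathcal{C}'(G,\alpha')\subseteq\mathcal{C}'(G,\alpha)$. In words: enlarging the assigning can only destroy broken compatible cycles, never create them.

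The second ingredient, already isolated inside the proof of \autoref{Generalized-Broken-Circuit-Theorem}, is that every member $H\in\mathcal{C}''(G,\alpha)$ is in fact a forest: an $\alpha$-compatible subgraph that avoids all broken $\alpha$-compatible cycles can contain no cycle, since any cycle $C\subseteq H$ would be $\alpha$-compatible and would place the broken cycle $C-e_C$ inside $H$. This is the point I expect to require the most care, because it is exactly what lets me promote an $\alpha$-object to an $\alpha'$-object. A forest has no cycles, so it is \emph{vacuously} $\alpha'$-compatible, and the only remaining requirement is that it contain no broken $\alpha'$-compatible cycle, which is immediate from $\mathcal{C}'(G,\alpha')\subseteq\mathcal{C}'(G,\alpha)$. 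Without the forest reduction one could not conclude $\alpha'$-compatibility at all, since a cycle with $\alpha(C)=0$ may well satisfy $\alpha'(C)=1$; this is the subtlety that the argument must not gloss over.

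Combining the two ingredients, every $i$-edge member of $\mathcal{C}''(G,\alpha)$ is a forest that is $\alpha'$-compatible and free of broken $\alpha'$-compatible cycles, hence lies in $\mathcal{C}''(G,\alpha')$; this yields $\mathcal{C}''(G,\alpha)\subseteq\mathcal{C}''(G,\alpha')$ and therefore $w_i(G,\alpha)\le w_i(G,\alpha')$ for every $i$. For the final assertion I would specialize $\alpha'$ to the given $\alpha$ and take the role of $\alpha$ to be the zero assigning, which is admissible (it equals $\alpha_{D,0}$) and satisfies $0\le\alpha(C)$ for every cycle $C$; since $P(G,0;k)$ is the chromatic polynomial, its unsigned coefficients are the $w_i(G)$, and the chain above gives $w_i(G)=w_i(G,0)\le w_i(G,\alpha)$.
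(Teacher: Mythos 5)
Your proposal is correct and follows essentially the same route as the paper: both reduce the claim via \autoref{Generalized-Broken-Circuit-Theorem} to the inclusion $\mathcal{C}''(G,\alpha)\subseteq\mathcal{C}''(G,\alpha')$, both observe that any $H\in\mathcal{C}''(G,\alpha)$ must be a forest (hence vacuously $\alpha'$-compatible), and both use $\alpha\le\alpha'$ to get $\mathcal{C}'(G,\alpha')\subseteq\mathcal{C}'(G,\alpha)$ so that $H$ avoids all broken $\alpha'$-compatible cycles. Your write-up is in fact slightly more explicit than the paper's about why the forest reduction is indispensable, but the argument is the same.
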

\begin{proof}
By \autoref{Generalized-Broken-Circuit-Theorem}, the proof  reduces to showing that every member $H$ of $\mathcal{C}''(G,\alpha)$ belongs to $\mathcal{C}''(G,\alpha')$. Given an element $H\in\mathcal{C}''(G,\alpha)$. As $H$ is an $\alpha$-compatible spanning subgraph of $G$ containing no  broken $\alpha$-compatible cycles, this implies that $H$ does not contain cycles. Furthermore, since $\alpha(C)\le\alpha'(C)$ for any cycle $C$ of $G$, we have $\mathcal{C}(G,\alpha)\supseteq\mathcal{C}(G,\alpha')$. This means $\mathcal{C}'(G,\alpha)\supseteq\mathcal{C}'(G,\alpha')$. This further deduces that $H$ does not contain  broken $\alpha'$-compatible cycles. Therefore, we conclude that $H$ is an $\alpha'$-compatible spanning subgraph of $G$ containing no  broken $\alpha'$-compatible cycles. Namely, $\mathcal{C}''(G,\alpha)\subseteq \mathcal{C}''(G,\alpha')$.
\end{proof}

When $G$ is a loopless graph, each unsigned coefficient $w_i(G)$ of $P(G;k)$ is a positive integer, which is easily obtained from \autoref{Broken-Cycle}. Applying \autoref{DCF} and \autoref{Comparison} to this, we derive that for any admissible assigning $\alpha$,  the coefficients of the cycle-assigning polynomial $P(G,\alpha; k)$ are nonzero and alternate in sign, unless $\alpha(e)=0$ for some loop $e$.
\begin{corollary}\label{Coefficient}
Let $\alpha$ be an admissible assigining of $G$. Then each unsigned coefficient $w_i(G,\alpha)$ of the cycle-assigning polynomial $P(G,\alpha;k)$ is a positive integer, unless $\alpha(e)=0$ for some loop $e$ of $G$. Moreover, $w_0(G,\alpha)=1$ whenever $P(G,\alpha;k)$ is not the zero polynomial.
\end{corollary}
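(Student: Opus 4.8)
The plan is to split on whether $G$ has a loop $e$ with $\alpha(e)=0$, reduce the remaining case to a loopless graph, and then invoke \autoref{Comparison} together with the classical positivity of chromatic coefficients. First I would dispose of the exceptional case: if some loop $e$ satisfies $\alpha(e)=0$, then the loop rule of \autoref{DCF} gives $P(G,\alpha;k)=\alpha(e)P(G-e,\alpha_{G-e};k)=0$, so every $w_i(G,\alpha)$ vanishes and both assertions hold vacuously. Hence I may assume $\alpha(e)=1$ for every loop $e$ of $G$, which is precisely the complementary hypothesis.

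Next I would eliminate the loops. Applying the loop case of \autoref{DCF} repeatedly with $\alpha(e)=1$ shows $P(G,\alpha;k)=P(G',\alpha_{G'};k)$, where $G'$ is the loopless graph obtained by deleting all loops of $G$. Since deleting a loop changes neither the vertex set nor the number of components, one has $|V(G')|=|V(G)|$ and $r(G')=r(G)$, so the two polynomials share the same coefficient sequence; in particular $w_i(G,\alpha)=w_i(G',\alpha_{G'})$ for all $i$. Moreover $\alpha_{G'}$ is again admissible, being the restriction of $\alpha=\alpha_{D,f}$ to $G'$. Conceptually this step records the fact that a loop with $\alpha$-value $1$ is an $\alpha$-incompatible cycle and therefore can never lie in an $\alpha$-compatible spanning subgraph, so it is invisible to the count in \autoref{Generalized-Broken-Circuit-Theorem}.

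With $G'$ loopless I would conclude by comparison. The zero assigning $0=\alpha_{D,0}$ is admissible and satisfies $0\le\alpha_{G'}$ pointwise on $\mathcal{C}(G')$, so \autoref{Comparison} yields $w_i(G',\alpha_{G'})\ge w_i(G',0)=w_i(G')$ for every $i$. Because $G'$ is loopless, each unsigned chromatic coefficient $w_i(G')$ is a positive integer for $0\le i\le r(G')=r(G)$, as recalled just before the statement via \autoref{Broken-Cycle}. Chaining the inequalities gives $w_i(G,\alpha)=w_i(G',\alpha_{G'})\ge w_i(G')\ge 1$, which is the desired positivity. For the final claim, note that when $P(G,\alpha;k)\ne0$ (equivalently, no loop has $\alpha$-value $0$) the only $\alpha$-compatible spanning subgraph with no edges is $(V(G),\emptyset)$, which vacuously contains no broken $\alpha$-compatible cycle; by \autoref{Generalized-Broken-Circuit-Theorem} this forces $w_0(G,\alpha)=1$.

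The main obstacle is that the naive route---comparing directly against $\alpha=0$ on $G$ itself---fails whenever $G$ carries a loop, since then $w_i(G,0)=0$ and the comparison becomes vacuous. The essential observation that unlocks the argument is that loops with $\alpha$-value $1$ contribute nothing to the $\alpha$-compatible spanning subgraph expansion, which is what legitimately reduces the problem to the loopless setting where the classical positivity of the chromatic polynomial's coefficients is available.
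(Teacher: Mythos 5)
Your proof is correct and follows essentially the same route as the paper: reduce to the loopless case and compare against the zero assigning via \autoref{Comparison}, using the positivity of the chromatic coefficients recorded before the corollary via \autoref{Broken-Cycle}. Your explicit loop-elimination step through the loop rule of \autoref{DCF} supplies a detail the paper leaves implicit, and you correctly note why it is needed: the direct comparison $w_i(G,0)\le w_i(G,\alpha)$ on $G$ itself is vacuous when $G$ has loops.
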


According to the relationship $P(G,\alpha;k)=k^{c(G)}\tau(G,\alpha;k)$, it is worth noting that for any admissible assigning $\alpha$, the coefficients of the $\alpha$-assigning polynomial $\tau(G,\alpha;k)$ share the same properties as those of the cycle-assigning polynomial $P(G,\alpha;k)$, as outlined in \autoref{Decomposition}, \autoref{Generalized-Broken-Circuit-Theorem}, \autoref{Value-1}, \autoref{Comparison} and \autoref{Coefficient}.

It is well-known that the unimodality of the sequence $w_i(G)$ was conjectured by Read \cite{Read1968}, and the log-concavity was conjectured by Hoggar \cite{Hoggar1974}. Both of these conjectures have been confirmed by Huh et al. \cite{AHE2018,Huh2012,Huh-Katz2012}. Inspired by their work, we conjecture that the unsigned coefficients of the cycle-assigning polynomial $P(G,\alpha;k)$ form a log-concave, and hence unimodal, sequence for all graphs $G$ and almost all admissible assignings of $G$.
\begin{conjecture}\label{Conj}
Let $\alpha$ be an admissible assigning of $G$ such that $\alpha(e)=1$ for all loops $e$ of $G$. Then the sequence $w_0(G,\alpha),w_1(G,\alpha),\ldots,w_{r(G)}(G,\alpha)$ satisfies the following properties:
\begin{itemize}
\item The sequence $w_i(G,\alpha)$ is unimodal, i.e., there is an index $0\le j\le r(G)$ such that 
\[
w_0(G,\alpha)\le\cdots\le w_{j-1}(G,\alpha)\le w_j(G,\alpha)\ge w_{j+1}(G,\alpha)\ge\cdots\ge w_{r(G)}(G,\alpha).
\]
\item The sequence $w_i(G,\alpha)$ is log-concave, i.e, for $j=1,\ldots, r(G)-1$,
 \[
w_{j-1}(G,\alpha)w_{j+1}(G,\alpha)\le  w_j(G,\alpha)^2.
 \]
\end{itemize}
\end{conjecture}

It is a pleasant surprise that the Generalized Broken Cycle Theorem provides a new perspective on addressing certain group coloring problems in graphs. A well-known property in \cite{Tutte1949} says that if a graph has a nowhere-zero $A$-flow, then it admits a nowhere-zero $A'$-flow for any Abelian group $A'$ whose order is at least the order of $A$. This does not extend to the more general concept of group connectivity observed by Jaeger et al. \cite{Jaeger1992}. For a connected planar graph $G$, Jaeger et al. \cite{Jaeger1992} also showed that $G$ is $A$-colorable if and only if the dual graph $G^*$ is $A$-connected. Naturally, there is a corresponding problem in group colorings: for any Abelian groups $A_1$ and $A_2$ with the same order, is a $A_1$-colorable graph also $A_2$-colorable? 

Most recently, Langhede and Thomassen in \cite{Langhede-Thomassen2020} confirmed that the property holds if $A'$ is sufficiently large compared to $A$. This further  implied that $G$ is always $A$-colorable whenever $A$ has a sufficiently large order, which can be explained by the Generalized Broken Cycle Theorem. More specifically, note the basic fact that for a loopless graph $G$, $G$ is $A$-colorable for some Abelian group $A$ if and only if $P\big(D(G),f;A\big)>0$ for any $f:E(G)\to A$. This is also equivalent to saying that $G$ is $A$-colorable if and only if $P(G,\alpha_{D,f};|A|)>0$ for any $D$-admissible assigning $\alpha_{D,f}$ related to $f:E(G)\to A$. When $G$ is a loopless graph, it follows from \autoref{Generalized-Broken-Circuit-Theorem} that $w_0(G,\alpha)=1$ for any admissible assigning $\alpha$ of $G$. Thus, $P(G,\alpha;|A|)>0$ holds for all admissible assignings $\alpha$ of $G$ in this case whenever the Abelian group $A$ has a large enough order. 

We conclude this section with the next corollary, summarizing the above findings.
\begin{corollary}\label{Group-Connectivity}
Let $A,A'$ be two Abelian groups of the same order, $G$ be a $A'$-colorable graph and $D,D'$ be two orientations. If  each $D$-admissible assigning $\alpha_{D,f}$ of $G$ with $f:E(G)\to A$ is a $D'$-admissible assigning $\alpha_{D',f'}$ of $G$ for some $f':E(G)\to A'$, then $G$ is $A$-colorable.
\end{corollary}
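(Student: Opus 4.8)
The plan is to reduce $A$-colorability to the strict positivity of the counting function and then transport that positivity from the group $A'$ to the group $A$ by matching admissible assignings. First I would recall that, by the definition of colorability together with the fact that $P\big(D(G),f;A\big)$ counts the $(A,f)$-colorings of $G$, the graph $G$ is $A$-colorable under $D$ precisely when $P\big(D(G),f;A\big)>0$ for every $f:E(G)\to A$. Hence it suffices to fix an arbitrary $f:E(G)\to A$ and establish $P\big(D(G),f;A\big)>0$; no looplessness assumption is needed for this reduction, since $P\big(D(G),f;A\big)$ literally enumerates colorings.

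Next I would feed this fixed $f$ into the hypothesis: the induced $D$-admissible assigning $\alpha_{D,f}$ coincides with some $D'$-admissible assigning $\alpha_{D',f'}$ for a suitable $f':E(G)\to A'$. Because $A$ and $A'$ share the same order and $\alpha_{D,f}=\alpha_{D',f'}$, \autoref{A=B} applies verbatim and delivers the key identity $P\big(D(G),f;A\big)=P\big(D'(G),f';A'\big)$. Conceptually, both sides equal the common cycle-assigning polynomial $P(G,\alpha;k)$ evaluated at $k=|A|=|A'|$ with $\alpha=\alpha_{D,f}=\alpha_{D',f'}$, which is exactly the content of \autoref{Assigning-Polynomial}.

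Finally, since $G$ is $A'$-colorable under $D'$, its defining property guarantees an $(A',f')$-coloring for this particular $f'$, so that $P\big(D'(G),f';A'\big)>0$. Combining this with the identity from the previous step forces $P\big(D(G),f;A\big)>0$, and as $f$ was arbitrary, $G$ is $A$-colorable, completing the argument.

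I do not anticipate a genuine obstacle, as the substantive work is already encapsulated in \autoref{A=B}; the only real care lies in the bookkeeping of quantifiers. Specifically, $A$-colorability is a universal statement ranging over all $f$, while the hypothesis supplies, for \emph{each} such $f$, merely the \emph{existence} of one matching $f'$, which is precisely the input the argument consumes. A secondary point worth flagging is that the colorability-versus-positivity equivalence is invoked on both the $A$ side and the $A'$ side, so I would state it once cleanly and apply it in both directions.
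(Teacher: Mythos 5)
Your proposal is correct and follows essentially the same route as the paper's own proof: reduce colorability to positivity of the counting function, invoke the hypothesis to match each $\alpha_{D,f}$ with some $\alpha_{D',f'}$, and apply \autoref{A=B} to transfer positivity from $A'$ to $A$. Your added remark about the quantifier bookkeeping and about not needing looplessness is a fair observation but does not change the argument.
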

\begin{proof}
Since $G$ is $A'$-colorable, we have $P\big(D'(G),f';A'\big)>0$ for any $f':E(G)\to A'$. As each $D$-admissible assigning $\alpha_{D,f}$ with $f:E(G)\to A$ is a $D'$-admissible assigning $\alpha_{D',f'}$ of $G$ for some $f':E(G)\to A'$,  it follows that for any $f:E(G)\to A$, 
\[
P\big(D(G),f;A\big)=P(G,\alpha;|A|)=P\big(D'(G),f';A'\big)>0
\] 
via \autoref{A=B}, where $\alpha=\alpha_{D,f}=\alpha_{D',f'}$. This implies that $G$ is $A$-colorable.
\end{proof}
\section*{Acknowledgements}
We are truly grateful to the anonymous referees for their insightful comments and valuable suggestions, which significantly improved this paper. \autoref{f-Tension-Polynomial} and \autoref{Other-Expression} are attributed to the referees' report and Kochol's work \cite{Kochol2024}. The work is supported by National Natural Science Foundation of China (Grant No. 12301424).

\end{document}